\newtheorem{theorem}{Theorem}[section]
\newtheorem{lemma}[theorem]{Lemma}
\newtheorem{proposition}[theorem]{Proposition}
\newtheorem{corollary}[theorem]{Corollary}
\theoremstyle{definition}
\newtheorem{definition}[theorem]{Definition}
\newtheorem{example}[theorem]{Example}
\theoremstyle{remark}
\newtheorem{remark}[theorem]{Remark}
\numberwithin{equation}{section}
\begin{document}

\title[ The spectrum of operators on $C(K)$  ]{The spectrum of operators on $C(K)$ with the Grothendieck property and characterization of $J$-class operators which are adjoints}

\author[Amir Bahman Nasseri]{Amir Bahman Nasseri}
\address{Fachbereich C-Mathematik und Naturwissenschaften, Arbeitsgruppe Funktionalanalysis, Bergische Universit\"{a}t Wuppertal, D-42119 Wuppertal, Germany}
\email{nasseri@math.uni-wuppertal.de}

\date{}

\begin{abstract} This article deals with properties of spectra of operators on $C(K)$-spaces with the Grothendieck property (e.g. $l^{\infty}$) and application to so called $J$-class operators introduced by A. Manoussos and G. Costakis. We will show that $C(K)$ has the Grothendieck property if and only if the boundary of the spectrum of every operator on $C(K)$ consists entirely of eigenvalues of its adjoint. As a consequence we will see that there does not exist invertible $J$-class operators on  $C(K)$ with the Grothendieck property. In the third section we will give a quantitative and qualitative characterization of all $J$-class operators on $l^\infty$ which are adjoints from operators on $l^1$.
\end{abstract}

\maketitle

\section*{Preliminaries and notations} Let $X, Y$ be complex Banach spaces and denote by $L(X,Y)$ the Banach space of all bounded and linear operators from $X$ to $Y$. If $T:X\rightarrow X$ is a bounded linear operator then $\sigma(T)$ stands for the spectrum of $T$ and $\rho(T)$ for its resolvent set. An operator $T$ is called weakly compact if $T(B_{X})$ is relative weakly compact in $Y$. By $W(X)$ we denote set of all weakly compact operators on $X$ into $X$ and $L(X)/W(X)$ is called the weak Calkin algebra endowed with the usual quotient norm. An operator $T:X\rightarrow Y$ is called tauberian if $(T^{**})^{-1}(Y)\subset X$ holds. Further the operator $T^{co}: X^{**}/X\rightarrow Y^{**}/Y$ is defined by $T^{co}(x^{**}+X)=T^{**}x^{**}+Y$. For properties of $T^{co}$ and tauberian operators we refer the reader to \cite{M.G.}. Let $E\subset X$ be a $T$-invariant closed subspace of $T\in L(X)$, then the operator $\widehat{T}:X/E\rightarrow X/E$ is defined by $\widehat{T}\left[x\right]_{E}:=\left[Tx\right]_{E}$. If $A$ is a Banach algebra then $G(A)$ stands for the invertible elements and $G_{r}(A)\ (G_{l}(A))$ denotes the set of the right (left) invertible elements. An element $a\in A$ is called left (right) topological divisor of zero if there exists a norm one sequence $x_n$ in $A$ such that $ax_n\ (x_{n}a)$ tends to zero in norm. By $\Phi(X)$ we denote the set of the Fredholm-operators and $\Phi_{+}(X)$ stands for the set of all upper-semi Fredholm operators (i.e. finite dimensional kernel and closed range). Further $\Phi_{r}(X)$ stands for the set of all essentially right invertible operators ($T\in L(X)$ is called right essentially invertible if there exists $S\in L(X)$ and $K\in K(X)$ such that $TS=I+K$ holds). The symbol $\text{orb}(T,x)$ denotes the orbit of $x$ under $T$, i.e. $\text{orb}(T,x):=\{T^nx: n\in\mathbb{N}\}$. If $X$ is separable and $\text{orb}(T,x)$ is dense, then $T$ is called hypercyclic, which is equivalent to say that $T$ is topologically transitive, i.e. for each pair of non empty open subsets $U,V\subset X$ there exists a positive integer $n$, such that $T^{n}(U)\cap V\neq\emptyset$. By $J_{T}(x)$ we mean the $J$-set of $x$ under $T$, i.e.
\begin{align*}J_T(x):=\{y\in X: \text{ there exists a strictly increasing sequence}&\\ \text{of natural numbers }(k_n) \text{ and a sequence }&\\ (x_n)\text{ in } X, \text{ such that } x_n\rightarrow x \text{ and } T^{k_n}x_n\rightarrow y\}.
\end{align*}
If $J_{T}(x)=X$ for some $x\in X\backslash\{0\}$, then $T$ is called a $J$-class operator. By $l^\infty$ we denote the space of bounded sequences endowed with the usual $\sup$-norm. In \cite{Bermudez} it is shown that on $l^{\infty}$ there does not exist any topological transitive operator using spectral properties of operators on $l^{\infty}$ which are also listed and proved in \cite{Bermudez}. On the other hand there exist $J$-class operators like the weighted backward shift $\lambda B:l^{\infty}\rightarrow l^{\infty}$, $\lambda B(x_1,x_2,\ldots):=(\lambda x_2,\lambda x_3,\ldots )$ for $\left|\lambda\right|>1$ (see \cite{G.C.} and \cite{G.C.2}).
%
% ---------- Section ----------------------------------------------
%

\section{Introduction}
In \cite{Bermudez} it is shown by T. Berm\'{u}dez and N. J. Kalton that the point spectrum of the adjoint of an operator defined on a $C(K)$-space with the Grothendieck property is non-empty. As a direct consequence there does not exists topological transitive operators on this space. In \cite{Lotz} H. Lotz obtained a similar result for operators on Banach spaces $X$ which have the Grothendieck- and the Dunford Pettis property but with restrictive assumptions on the resolvent $R(\lambda, T):=(\lambda I-T)^{-1}$ with $\lambda\in\rho(T)$.\\\\
\textbf{Theorem \textit{(Lotz, \cite{Lotz}).}} \textit{Let $X$ be a Banach space which has the\\ Grothendieck- and the Dunford Pettis property. Suppose $\lambda_n$ is a sequence in the resolvent set of $T\in L(X)$ with the property that $\lambda_n$ converges to some $\lambda\in\partial\sigma(T)$. Further assume that $(\lambda_n-\lambda)R(\lambda_n,T)$ is uniformly bounded. Then $\lambda$ is an eigenvalue of $T^{*}$.}\\\\
One can construct operators on $l^{\infty}$ for which the uniform boundedness condition in the above theorem is not satisfied. Indeed take a compact subset $K$ of $\mathbb{C}$ such that $K$ has a very sharp peak with the property that for every sequence $(\lambda_n)$ which converges outside $K$ to the peak, say $\lambda$, we have that $(\lambda_n-\lambda)\ (\text{dist}(\lambda_n,K))^{-1}$ is unbounded. Now take a diagonal operator $T$ on $l^{\infty}$ such that $\sigma(T)=K$. Then we obtain 
\[\left|\lambda_{n}-\lambda\right|\left\|R(\lambda_n,T)\right\|\geq (\lambda_n-\lambda)\ (\text{dist}(\lambda_n,K))^{-1}.\]
This shows that the uniform  boundedness condition is not satisfied. In this section we will show that on the space $C(K)$ with the Grothendieck property this condition is not necessary.
The main theorem in Section 2 reads as follows:\\\\
\textbf{Theorem.} \textit{Consider $T\in L(C(K))$, where $K$ is a compact Hausdorff space. Then $C(K)$ has the Grothendieck property (e.g. $C(\beta\mathbb{N})=l^{\infty}$ or $C(\beta\mathbb{N}\backslash\mathbb{N})=l^{\infty}/c_0$) if and only  
\[\partial\sigma(T)\subset\sigma_{p}(T^{*})\]
holds for every operator $T\in L(C(K))$. In both cases every $J$-class operator is not invertible.}\\\\
In Section 3 we will characterize the $J$-Class behavior for operators on $l^{\infty}$, which are adjoints of operators on $l^{1}$. As a corollary we get\\\\
\textbf{Corollary.} Consider the backward shift $B$ on $l^{\infty}$ and let $f$ be a holomorphic function in a neighborhood of the closed unit disc. Then the following statements are equivalent.\\\\
(i) $f(B)$ is $J$-class.\\\\
(ii) $f(\partial\mathbb{D})\cap\overline{\mathbb{D}}=\emptyset$ and $\overline{\mathbb{D}}\subseteq f(\mathbb{D})$.\\\\
It seems to be a hard task to determine for an operator $T$ on a Banach space X, those vectors such that $J_{T}(x)=X$. For $B_{w}:l^{\infty}\rightarrow l^\infty$, where $B_{w}$ is the backward shift and $w=(w_n)$ is a positive and bounded weight sequence, G. Costakis and A. Manoussos obtained that if $B_{w}$ is $J$-class then
\[A_{B_{w}}:=\{x\in l^{\infty} :\ J_{B_{w}}(x)=l^{\infty}\}=c_0.\]
We will see also in Section 3 that $f(B_w)$, where $f$ is a holomorphic function defined on a neighborhood of $B_w$ that
\[A_{f(B_{w})}:=\{x\in l^{\infty} :\ J_{f(B_{w})}(x)=l^{\infty}\}=c_0.\]
More generally we will see that if $T=S^{**}$, where $S:c_0\rightarrow c_0$ then we get $A_{T}\subset c_0$. In our last section we state some open questions occurred during our research.

%
% ---------- Section ----------------------------------------------
%
\section{Spectrum of operators on $C(K)$-spaces with the Grothendieck property}
%
% ---------- Lemma ----------------------------------------------
%
We begin with some lemmas.
\begin{lemma}\label{lemma-Weis} Let $M$ be a Banach space isomorphic to a complemented subspace of $L_1[0,1]$. Then we have \[\partial\Phi(M)\subset \{T\in L(M) : T \text{ is not tauberian}\}.\] 
\end{lemma}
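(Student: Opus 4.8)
The plan is to separate the statement into a soft, purely Fredholm-theoretic observation valid on every Banach space and a hard input that is special to $L_{1}$. The soft part is that for any Banach space $M$ one has $\partial\Phi(M)\cap\Phi_{+}(M)=\emptyset$: an operator lying on the boundary of the Fredholm operators cannot even be upper-semi Fredholm. The hard part, which is the only place the hypothesis on $M$ is used, is the theorem of L.~Weis recorded in \cite{M.G.}: if $M$ is isomorphic to a complemented subspace of $L_{1}[0,1]$, then every tauberian operator on $M$ is upper-semi Fredholm. Granting both, the lemma is immediate by contraposition --- if $T\in\partial\Phi(M)$ then $T\notin\Phi_{+}(M)$ by the soft part, and hence $T$ cannot be tauberian by Weis's theorem.

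First I would prove the soft part. Since $\Phi_{+}(M)$ is open in $L(M)$ and, by the classical stability of semi-Fredholm operators under small perturbations, the index $\operatorname{ind}\colon\Phi_{+}(M)\to\mathbb{Z}\cup\{-\infty\}$ is locally constant, the set
\[
\Phi_{+}(M)\setminus\Phi(M)=\{\,S\in\Phi_{+}(M):\operatorname{ind}(S)=-\infty\,\}
\]
is open in $L(M)$; concretely, it is the set of upper-semi Fredholm operators whose range has infinite codimension. An open set contained in the complement of $\Phi(M)$ is contained in the complement of $\overline{\Phi(M)}$, hence is disjoint from $\partial\Phi(M)$. Since $\Phi(M)$ itself is also disjoint from $\partial\Phi(M)$ (being open) and $\Phi_{+}(M)=\Phi(M)\cup\bigl(\Phi_{+}(M)\setminus\Phi(M)\bigr)$, we obtain $\Phi_{+}(M)\cap\partial\Phi(M)=\emptyset$, i.e. $\partial\Phi(M)\subset L(M)\setminus\Phi_{+}(M)$. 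No property of $M$ enters this argument.

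Second, the hypothesis on $M$ enters only through Weis's theorem: every tauberian operator on $M$ lies in $\Phi_{+}(M)$. Combining this with the previous step, any $T\in\partial\Phi(M)$ satisfies $T\notin\Phi_{+}(M)$ and is therefore not tauberian, which is the assertion of Lemma~\ref{lemma-Weis}.

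The only genuine obstacle is the appeal to Weis's theorem; its proof is substantial and uses the structure of $L_{1}$ (the Kadec--Pe\l czy\'nski dichotomy for bounded sequences in $L_{1}$ together with Rosenthal's lemma) to show that an infinite-dimensional kernel or a non-closed range in an $L_{1}$-type space forces the existence of some $x^{**}\in(T^{**})^{-1}(M)\setminus M$, i.e. makes $T^{co}$ non-injective. Since we may quote results established earlier, nothing beyond this citation and the elementary perturbation argument above is required; the one point demanding care is to invoke the ``complemented subspace of $L_{1}$'' form of Weis's theorem, so that it applies to $M$ itself rather than only to $L_{1}[0,1]$.
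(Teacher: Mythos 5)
Your ``soft part'' is correct and unproblematic: $\Phi(M)$ is open, and by the stability of the index on the open set of semi-Fredholm operators the set $\Phi_{+}(M)\setminus\Phi(M)=\{S\in\Phi_{+}(M):\operatorname{ind}S=-\infty\}$ is open and disjoint from $\Phi(M)$, whence $\partial\Phi(M)\cap\Phi_{+}(M)=\emptyset$. (This is exactly the fact the paper quotes from M\"{u}ller, p.~169, Lemma 1, in the proof of Proposition 2.3.) The gap is in your ``hard part''. There is no theorem of Weis in \cite{M.G.} asserting that every tauberian operator on a complemented subspace of $L_{1}[0,1]$ is upper semi-Fredholm, and the statement is false. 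The inclusion that holds on \emph{every} Banach space goes the other way: $\Phi_{+}\subseteq\{\text{tauberian}\}$. What is special about $L_{1}$ is only that the kernel of a tauberian operator is reflexive; but $L_{1}[0,1]$ contains infinite-dimensional reflexive subspaces (e.g.\ the Rademacher or Gaussian span $G\cong\ell_{2}$), and a surjection is tauberian if and only if its kernel is reflexive, so the quotient map $q:L_{1}\rightarrow L_{1}/G$ is tauberian with infinite-dimensional kernel. Since $L_{1}/G$ is a separable $\mathcal{L}_{1}$-space (Kisliakov) it embeds isomorphically into $L_{1}[0,1]$, and composing $q$ with such an embedding yields a tauberian endomorphism of $L_{1}$ that is not in $\Phi_{+}$. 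So the dichotomy you rely on fails precisely on the space the lemma is about, and the reduction collapses. Note also that your claim is strictly stronger than the lemma: an operator outside $\Phi_{+}$ but of index $+\infty$ lies outside $\overline{\Phi(M)}$, hence outside $\partial\Phi(M)$, and may perfectly well be tauberian without contradicting the statement to be proved.

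The theorem of Weis that actually drives the paper's proof is of a different nature: the norm identity $\left\|T+W(L_{1})\right\|=\left\|T^{co}\right\|$, identifying the weak Calkin algebra norm with the norm of the induced operator on $L_{1}^{**}/L_{1}$. The paper then argues inside the weak Calkin algebra: right invertibility there coincides with essential right invertibility, an element on the boundary of the right-invertible elements is a left topological divisor of zero, hence for $T\in\partial\Phi(L_{1})\subseteq\partial\Phi_{r}(L_{1})$ the operator $T^{co}$ is not bounded below, and for operators on $L_{1}$ ``tauberian'' is equivalent to ``$T^{co}$ bounded below'' (Gonz\'alez--Mart\'inez-Abej\'on). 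The passage to a complemented subspace $M$ is then done by extending $T$ by the identity on a complement. If you want to keep your two-step architecture, the hard step must be replaced by something along these lines; as stated, the crucial input of your proof is a nonexistent theorem.
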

\begin{proof}Consider first the case $M=L_1=L_1[0,1]$. By 3.1 in \cite{L.W.} the equation $\left\|T+W(L_1)\right\|=\left\|T^{co}\right\|$ holds for any $T\in L(L^1)$ (see also \cite{M.G.2}, proof of Thm. 2.2). Let $T+W(L_1)$ be right invertible in the weak Calkin algebra. Hence there exist operators $S\in L(L_1)$ and $W\in W(L_1)$
such that $TS=I+W$. Multiplying this equation from the right with $I-W$, we see that $T$ is essentially left invertible, since $L_1$ has Dunford Pettis Property and so $W^2$ is compact. It follows that the right invertible elements in the weak Calkin algebra are the same as the right essentialy invertible operators. Take $T\in\partial\Phi_{r}(L_1)$, then $[T]\in\partial G_{r}(A)$ with $A:=L(L_1)/W(L_1)$. One find now a sequence $T_n$ of operators such that $\left[T_n\right]=T_n+W(L_1)$ are right invertible and converge in norm to $[T]$. Hence there exists a sequence $S_n$ such that $[S_n]$ tends to infinity in norm and $[T_nS_n]=[I]$ for all $n$ (see \cite{V.M.}, p. 5, 6). Define now $\tilde{S}_n:=\frac{1}{\left\|\left[\tilde{S}_n\right]\right\|} S_n$. Then $1=\left\|\tilde{S}_n\right\|=\left\|(\tilde{S_n})^{co}\right\|$. From this we get
\begin{align*}\left\|T^{co}(\tilde{S_n})^{co}\right\|&=\left\|\left[T\right]\left[\tilde{S_n}\right]\right\|=\left\|\left[T_n+T-T_n\right]\frac{\left[S_n\right]}{\left\|\left[S_n\right]\right\|}\right\|\\&\leq\left\|\left[T-T_n\right]\right\|+\frac{1}{\left\|\left[S_n\right]\right\|}.
\end{align*}
This shows that $\left\|T^{co}(\tilde{S_n})^{co}\right\|$ tends to zero and therefore $T^{co}$ is a left topological divisor of zero, hence $T^{co}$ is not bounded below (see {V.M.}, p. 89) and therefore $T$ is not tauberian (see \cite{M.G.}, p. 89). Since $\partial\Phi\subset\partial\Phi_{r}$ (see \cite{V.M.} p. 5, 6) the statement follows for $L_1[0,1]$. Consider now the general case, where $M$ is isomorphic to some complemented subspace of $L_1[0,1]$. For our purpose it suffices to consider $M$ as a subspace of $L_1$ and hence we find a closed subspace $N$ of $L_1$ such that $M\oplus N=L_1$ holds. Take now an operator $T$ from $\partial\Phi(M)$ and a sequence $T_n$ in $\Phi(M)$ which converges to $T$. We can now extend the operator $T_n$ and $T$ by the identity on $N$ to the operators $\tilde{T_n}=I\oplus T_n$ and $\tilde{T}=I\oplus T$. Then $\tilde{T_n}$ is Fredholm and converges to $\tilde{T}\in\partial\Phi (L_1)$ and hence is not tauberian by the above. It is then easy to see that $T$ is not tauberian.
\end{proof}
%
% ---------- Lemma ----------------------------------------------
%
The next lemma is essentially due to Tzafriri (see \cite{L.T.}, Lemma 1) who stated it for a single operator $T$ defined on $L_{1}(\Omega,\Sigma,\mu)$, where $(\Omega,\Sigma,\mu)$ is an arbitrary measure space.

\begin{lemma} Consider $L_1(\Omega,\Sigma,\mu)$ where $(\Omega,\Sigma,\mu)$ is an arbitrary measure space and let $\{T_n\}$ be any countable set of operators on $L_1(\Omega,\Sigma,\mu)$ into itself. Further let $M$ be any separable subspace of $L_1(\Omega,\Sigma,\mu)$. Then there exists a subset $\tilde{\Omega}$ of $\Omega$ and a $\sigma$-subring $\tilde{\Sigma}$ of $\Sigma$ such that $(\tilde{\Omega},\tilde{\Sigma},\mu)$ is a $\sigma$-finite measure, $L_1(\tilde{\Omega},\tilde{\Sigma},\mu)$ is separable and\\ $M\subset L_1(\tilde{\Omega},\tilde{\Sigma},\mu)$. Further $L_1(\tilde{\Omega},\tilde{\Sigma},\mu)$ is $T_n$-invariant for each $n\in\mathbb{N}$.
\end{lemma}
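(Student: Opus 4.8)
The plan is to build $(\tilde\Omega,\tilde\Sigma)$ by a countable exhaustion argument, iterating the following closure requirements until they stabilize. Start with a countable set $D_0\subset L_1(\Omega,\Sigma,\mu)$ whose closed linear span is $M$ (this exists since $M$ is separable). Each element of $L_1$ is supported, up to a null set, on a $\sigma$-finite set, so to each $f\in D_0$ attach a $\sigma$-finite carrier $A_f\in\Sigma$ with $f=0$ $\mu$-a.e. off $A_f$; let $\Sigma_0$ be the $\sigma$-ring generated by $\{A_f:f\in D_0\}$ together with all measurable subsets of the $A_f$ of finite measure — this is a countably generated $\sigma$-ring sitting inside a $\sigma$-finite set $\tilde\Omega_0:=\bigcup_f A_f$.

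First I would make precise the single inductive step. Given a countably generated $\sigma$-finite sub-$\sigma$-ring $\Sigma_k$ with underlying set $\tilde\Omega_k$, pick a countable dense subset $D_k$ of $L_1(\tilde\Omega_k,\Sigma_k,\mu)$ (separability of $L_1$ of a countably generated $\sigma$-finite measure space is standard). For every $n\in\mathbb N$ and every $g\in D_k$ the function $T_n g$ again lives on a $\sigma$-finite carrier; collect all these carriers, adjoin them (and their finite-measure measurable subsets) to the generators of $\Sigma_k$, and call the resulting countably generated $\sigma$-finite $\sigma$-ring $\Sigma_{k+1}$, with $\tilde\Omega_{k+1}$ its union. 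Then set $\tilde\Sigma:=\sigma\text{-ring generated by }\bigcup_k\Sigma_k$ and $\tilde\Omega:=\bigcup_k\tilde\Omega_k$. Because a countable union of countably generated $\sigma$-rings is again countably generated, and a countable union of $\sigma$-finite sets is $\sigma$-finite, $(\tilde\Omega,\tilde\Sigma,\mu)$ is $\sigma$-finite with countably generated $\tilde\Sigma$, hence $L_1(\tilde\Omega,\tilde\Sigma,\mu)$ is separable. By construction $M=\overline{\mathrm{span}}\,D_0\subset L_1(\tilde\Omega_0,\Sigma_0,\mu)\subset L_1(\tilde\Omega,\tilde\Sigma,\mu)$.

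It remains to check $T_n$-invariance. Fix $n$ and $h\in L_1(\tilde\Omega,\tilde\Sigma,\mu)$. Approximate $h$ in $L_1$-norm by functions $h_j$ each lying in some $L_1(\tilde\Omega_{k_j},\Sigma_{k_j},\mu)$, and within each such space approximate $h_j$ by elements of $D_{k_j}$; by the step above, $T_n$ applied to an element of $D_{k_j}$ lies in $L_1(\tilde\Omega_{k_j+1},\Sigma_{k_j+1},\mu)\subset L_1(\tilde\Omega,\tilde\Sigma,\mu)$. Since $L_1(\tilde\Omega,\tilde\Sigma,\mu)$ is a closed subspace of $L_1(\Omega,\Sigma,\mu)$ and $T_n$ is bounded, $T_n h$ is a norm-limit of elements of this closed subspace, hence lies in it. The main obstacle — really the only genuinely delicate point — is the bookkeeping around $\sigma$-rings versus $\sigma$-algebras in a non-$\sigma$-finite ambient space: one must phrase "carrier of an $L_1$-function" and "countably generated" correctly so that the countable iteration stays inside a $\sigma$-finite piece and does not secretly require choosing representatives in a way that breaks measurability. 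Once the carrier lemma and the "countable union of countably generated $\sigma$-finite $\sigma$-rings is countably generated $\sigma$-finite" observation are stated cleanly, the rest is the routine diagonal/exhaustion argument sketched above.
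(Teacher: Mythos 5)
Your overall strategy --- iterated exhaustion by countably generated $\sigma$-finite sub-$\sigma$-rings, closing up at each stage under (carriers of) the images $T_n g$ of a countable dense set, and deducing invariance at the end from density, boundedness of $T_n$, and closedness of $L_1(\tilde{\Omega},\tilde{\Sigma},\mu)$ --- is exactly the route the paper takes: it simply reruns Tzafriri's Lemma 1 with $C_k:=\{T_m\chi_B : B\in B_k,\ m\in\mathbb{N}\}$ in place of the single-operator family. Your closing invariance argument is correct.

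There is, however, one step that fails as written: the definition of $\Sigma_0$ (and likewise of $\Sigma_{k+1}$). You generate it by the carriers $A_f$ \emph{together with all measurable subsets of the $A_f$ of finite measure} and assert the result is countably generated. That assertion is false in general: take $\Omega=\{0,1\}^{[0,1]}$ with the product coin-flipping measure and $f=\mathbf{1}_{\Omega}\in D_0$; then $A_f=\Omega$ has measure $1$, so your generating family is all of $\Sigma$, which is not countably generated, and $L_1(\Sigma_0)$ is not separable (the coordinate functions form an uncountable $\tfrac12$-separated set) --- even though $M$ may be one-dimensional. On the other hand you cannot simply drop those subsets, because the $\sigma$-ring generated by the carriers alone need not make the functions $f\in D_0$ measurable, so $M\subset L_1(\Sigma_0)$ would fail. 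The correct middle ground --- and this is precisely what Tzafriri's countable families $B_k$ are --- is to adjoin, for each relevant function $g$, only a \emph{countable} family of finite-measure sets with respect to which $g$ is measurable up to a null set (for instance the sets occurring in a sequence of integrable simple functions converging to $g$, or the sets $g^{-1}(U)\cap\{|g|>1/j\}$ for $U$ ranging over a countable base of $\mathbb{C}\setminus\{0\}$). With that replacement each $\Sigma_k$ really is countably generated and supported on a $\sigma$-finite set, each $L_1(\Sigma_k)$ is separable, and the rest of your argument goes through unchanged.
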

\begin{proof} Replace the set $C_k$ defined as in the proof of Lemma 1, \cite{L.T.} by $C_{k}:=\{T_{m}\chi_{B} : B\in B_{k}, m\in\mathbb{N}\}$, where $\{T_m \}_{m\in\mathbb{N}}$ is a countable family of operators in $L_{1}(\Omega,\Sigma,\mu)$ and $B_k$ are similar defined as in Lemma 1 of \cite{L.T.}.
\end{proof}
%
% ---------- Proposition ----------------------------------------------
%
\begin{proposition} $(i)$ Consider $L_1(\Omega,\Sigma,\mu)$, where $(\Omega,\Sigma,\mu)$ is an arbitrary measure space. Let $\{T_n\}_{n\in\mathbb{N}}$ be a countable set of operators in $\partial\Phi(L_1(\Omega,\Sigma,\mu))$.
Then there exist $\tilde{\Omega}\in\Sigma$ and a $\sigma$-subring $\tilde{\Sigma}$ of $\Sigma$, such that $(\tilde{\Omega},\tilde{\Sigma},\mu)$ is a $\sigma$-finite measure space, $E:=L_{1}(\tilde{\Omega},\tilde{\Sigma},\mu)$ is separable and $T_n$-invariant. Furthermore we have that
\[T_{n}|_{E}\in\partial\Phi(E)\]
holds for each $n\in\mathbb{N}$, where $T_n|_{E}:E\rightarrow E$ is the restriction of $T_n$ to $E$. As a conclusion each $T_n$ is not tauberian.\\
$(ii)$ Let $T\in L(L_1(\Omega,\Sigma,\mu))$. Then there exists a $T$-invariant subspace $E$ isomorphic to $l^1$ or $L_1[0,1]$ such that
\[\partial\sigma_{ess}(T)\subset\partial\sigma_{ess}(T|_{E})\subset\{\lambda\in\mathbb{C}\  |\  T-\lambda \text{ is not tauberian }\}.\]
\end{proposition}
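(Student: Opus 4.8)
The plan is to transfer the whole problem to a separable, $\sigma$-finite $L_1$-subspace via Lemma 2.2, verify that the restricted operators stay in the boundary of the Fredholm set so that Lemma 2.1 applies, and then lift the conclusion back up using the conditional-expectation projection. For part $(i)$: for each $n$ fix a sequence $(S_{n,k})_k$ in $\Phi(L_1(\Omega,\Sigma,\mu))$ with $S_{n,k}\to T_n$ together with a Fredholm parametrix $R_{n,k}$, i.e.\ $S_{n,k}R_{n,k}=I-F^{(1)}_{n,k}$ and $R_{n,k}S_{n,k}=I-F^{(2)}_{n,k}$ with $F^{(i)}_{n,k}$ of finite rank. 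Since $T_n\notin\Phi(L_1)$, fix for each $n$ a countable set of \emph{witnessing vectors}: an infinite linearly independent subset of $\ker T_n$ if $\dim\ker T_n=\infty$; otherwise a basis of $\ker T_n$ together with either a normalized sequence in a fixed complement of $\ker T_n$ whose $T_n$-images tend to $0$ (when $T_n$ has non-closed range) or an infinite sequence linearly independent modulo $T_n(L_1)$ (when $T_n$ has closed range of infinite codimension). Let $M$ be the closed span of all these vectors; it is separable. Applying Lemma 2.2 to the countable family $\{T_n\}\cup\{S_{n,k}\}\cup\{R_{n,k}\}$ and to $M$ yields $E:=L_1(\tilde\Omega,\tilde\Sigma,\mu)$ as in the statement: separable, $\sigma$-finite, containing $M$, and invariant under every operator of the family.

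Now restrict. Since $E$ is invariant under $S_{n,k}$ and $R_{n,k}$, the parametrix identities restrict to $(S_{n,k}|_E)(R_{n,k}|_E)=I_E-F^{(1)}_{n,k}|_E$ and $(R_{n,k}|_E)(S_{n,k}|_E)=I_E-F^{(2)}_{n,k}|_E$ with $F^{(i)}_{n,k}|_E$ of finite rank on $E$; hence $S_{n,k}|_E\in\Phi(E)$, and since $\|S_{n,k}|_E-T_n|_E\|\le\|S_{n,k}-T_n\|\to 0$ we get $T_n|_E\in\overline{\Phi(E)}$. On the other hand the witnessing vectors lie in $M\subset E$ and show, in each of the three cases, that $T_n|_E$ has infinite-dimensional kernel, or non-closed range, or closed range of infinite codimension in $E$ (here one uses that the finite-dimensional $\ker T_n$ is contained in $E$, so $\ker(T_n|_E)=\ker T_n$); thus $T_n|_E\notin\Phi(E)$, i.e.\ $T_n|_E\in\partial\Phi(E)$. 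Being a separable $\sigma$-finite $L_1$-space, $E$ is isomorphic to a complemented subspace of $L_1[0,1]$, so Lemma 2.1 gives that $T_n|_E$ is not tauberian on $E$. Finally $E$ is norm-one complemented in $L_1(\Omega,\Sigma,\mu)$ by the conditional expectation $P$ onto $\tilde\Sigma$; with $j\colon E\hookrightarrow L_1$ one has $j^{**}(T_n|_E)^{**}=T_n^{**}j^{**}$, so a non-tauberian witness $\xi\in E^{**}\setminus E$ with $(T_n|_E)^{**}\xi\in E$ is carried to $j^{**}\xi\in L_1^{**}\setminus L_1$ (as $P^{**}j^{**}\xi=\xi\notin E$) with $T_n^{**}(j^{**}\xi)\in L_1$, i.e.\ $T_n$ is not tauberian.

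For part $(ii)$ we may assume $L_1(\Omega,\Sigma,\mu)$ is infinite-dimensional (otherwise $\sigma_{ess}(T)=\emptyset$ and there is nothing to prove); then $\sigma_{ess}(T)$ is nonempty, closed and bounded, so $\partial\sigma_{ess}(T)\neq\emptyset$. Choose a countable dense subset $\{\lambda_j\}$ of $\partial\sigma_{ess}(T)$. Each $T-\lambda_j$ is not Fredholm and, since $\lambda_j\in\overline{\rho_{ess}(T)}$, lies in $\partial\Phi(L_1)$; moreover we may pick $\mu_{j,k}\to\lambda_j$ with $T-\mu_{j,k}\in\Phi(L_1)$. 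Running the construction of part $(i)$ on the family $\{T-\lambda_j\}_j$ with the \emph{specific} approximants $S_{j,k}:=T-\mu_{j,k}$ (and their parametrices), we obtain a $T$-invariant subspace $E$; since $E$ is a separable, $\sigma$-finite, infinite-dimensional $L_1$-space, it is isomorphic to $l^1$ or to $L_1[0,1]$ (using $L_1[0,1]\oplus_1 l^1\cong L_1[0,1]$, by Pe\l czy\'nski's decomposition method). By part $(i)$, $T|_E-\lambda_j=(T-\lambda_j)|_E\in\partial\Phi(E)$ while $T|_E-\mu_{j,k}=(T-\mu_{j,k})|_E\in\Phi(E)$; hence every $\lambda_j$ lies in $\sigma_{ess}(T|_E)$ and every $\mu_{j,k}$ in $\rho_{ess}(T|_E)$. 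Taking closures and using density of $\{\lambda_j\}$ gives $\partial\sigma_{ess}(T)\subset\sigma_{ess}(T|_E)\cap\overline{\rho_{ess}(T|_E)}=\partial\sigma_{ess}(T|_E)$. For the last inclusion, any $\lambda\in\partial\sigma_{ess}(T|_E)$ gives $T|_E-\lambda\in\partial\Phi(E)$, which by Lemma 2.1 is not tauberian on $E$, hence $T-\lambda$ is not tauberian on $L_1(\Omega,\Sigma,\mu)$ by the complementation argument of part $(i)$.

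The main obstacle is the restriction step in part $(i)$: the restriction of a Fredholm operator to an invariant subspace need not be Fredholm, which is exactly why the parametrices $R_{n,k}$ must be fed into Lemma 2.2, and dually one must make sure that $T_n|_E$ genuinely leaves $\Phi(E)$, which is why the witnessing vectors must be forced into $M$ \emph{before} applying Lemma 2.2. The remaining ingredients — the classification of separable $\sigma$-finite $L_1$-spaces, the conditional-expectation complementation with the attendant biadjoint bookkeeping, and the reduction of $\partial\sigma_{ess}$ to a countable dense set together with the choice of the special approximants $T-\mu_{j,k}$ — are routine.
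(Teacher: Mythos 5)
Your proof is correct and follows essentially the same route as the paper: reduce to a separable $T_n$-invariant $E=L_1(\tilde\Omega,\tilde\Sigma,\mu)$ via the Tzafriri-type lemma applied to the approximating Fredholm operators together with their parametrices and a separable space of witnesses for non-Fredholmness, then conclude $T_n|_E\in\partial\Phi(E)$ and invoke Lemma 2.1. The only differences are cosmetic: where you case-split on why $T_n\notin\Phi$, the paper uses the single fact that $\partial\Phi$ is disjoint from $\Phi_+$ to produce one uniform type of witness, and it leaves implicit the lifting of non-tauberianness from $E$ back to $L_1(\Omega,\Sigma,\mu)$ and the choice of the specific approximants $T-\mu_{j,k}$ in part (ii), both of which you spell out correctly.
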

\begin{proof} Consider $T_{n}\in\partial\Phi(X)$, where $X=L_1(\Omega,\Sigma,\mu)$. Note that in particular $T_n\notin\Phi_{+}(X)$ for each $n$, (see \cite{V.M.}, p. 169, Lemma 1). Hence there exists a sequence $(f_{n,m})_{m\in\mathbb{N}}$ in $X$ such that $T_nf_{n,m}$ is convergent but $f_{n,m}$ has no convergent subsequence. Define $M:=\overline{\text{span}}\{f_{n,m} | n,m\in\mathbb{N}\}$. Further there exists  sequence $T_{n,m}\in\Phi(X)$ such that $T_{n,m}$ converges to $T_{n}$ for $m\rightarrow\infty$. Since $T_{n,m}\in\Phi(X)$ we find $S_{n,m}\in\Phi(X)$ and $L_{n,m},\ K_{n,m}\in K(X)$
such that 
\[S_{n,m}T_{n,m}=I+K_{n,m} \quad\text{ and }\quad T_{n,m}S_{n,m}=I+L_{n,m}\]
holds for all $n,m\in\mathbb{N}$. By the above Lemma we find a separable subspace $E:=L_1(\tilde{\Omega},\tilde{\Sigma},\mu)$ which contains $M$ and is invariant for $\{S_{n,m}\}_{n,m\in\mathbb{N}}\cup\{T_{n,m}\}_{n,m\in\mathbb{N}}$. Consider the restrictions $T_{n,m}|_E$ and $S_{n,m}|_E$. Then we get that
\[S_{n,m}|_E\ T_{n,m}|_E=I_E+K_{n,m}|_E \quad\text{ and }\quad T_{n,m}|_{E}\  S_{n}|_E =I_E+L_{n,m}|_{E}\]
holds for all $n,m$ and $I_E$ denotes the identity on $E$. It follows that $T_{n,m}|_E\in\Phi(E)$ and converges to $T_{n}|_{E}$ for $m\rightarrow\infty$. Since $M\subset E$ it follows that $T_{n}|_{E}$ is not upper-semi Fredholm and therefore $T_{n}|_{E}\in\partial\Phi(E)$ for each $n\in\mathbb{N}$. Note that since $E$ is separable and infinite dimensional ($M\subset E$), it follows that $E$ is isomorphic to $l_1(\mathbb{N})$ or $L_1[0,1]$, (see \cite{P.W.}, p. 83) . By Lemma \ref{lemma-Weis} we conclude that $T_{n}|_{E}$ is not tauberian and therefore $T_n$. To prove $(ii)$ take any sequence $\{\lambda_n\}$ dense in $\partial\sigma_{ess}(T)$. Then apply $(i)$ to the operators $T_n:=T-\lambda_n$.
\end{proof}

The next proposition can be found in \cite{Bermudez}.
%
% ---------- Proposition ----------------------------------------------
%
\begin{proposition}\label{Grothendieck} Suppose $X$ is a Grothendieck space and $T\in L(X)$. Then $N(T^{*})=\{0\}$ holds if and only if $N(T^{***})=\{0\}$ holds.
\end{proposition}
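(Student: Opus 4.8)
The plan is to treat the two implications separately. The implication ``$N(T^{***})=\{0\}\Rightarrow N(T^{*})=\{0\}$'' needs no hypothesis on $X$: writing $J_{X^{*}}\colon X^{*}\to X^{***}$ for the canonical embedding, one has the intertwining identity $T^{***}J_{X^{*}}=J_{X^{*}}T^{*}$ (the restriction of $T^{***}$ to the canonical copy of $X^{*}$ is $T^{*}$), so $T^{*}x^{*}=0$ forces $T^{***}(J_{X^{*}}x^{*})=0$, hence $J_{X^{*}}x^{*}=0$ and $x^{*}=0$.

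For the forward implication I would first rephrase everything in terms of density of ranges via $N(S^{*})=R(S)^{\perp}$: namely $N(T^{*})=\{0\}$ iff $R(T)$ is norm-dense in $X$, and $N(T^{***})=\{0\}$ iff $R(T^{**})$ is norm-dense in $X^{**}$. Assume then that $R(T)$ is norm-dense. Two facts are automatic. First, $R(T^{**})$ is always $\sigma(X^{**},X^{*})$-dense, since $\overline{R(T^{**})}^{\,\sigma(X^{**},X^{*})}=N(T^{*})^{\perp}=X^{**}$. Second, $\overline{R(T^{**})}^{\,\|\cdot\|}\supseteq J_{X}\bigl(\overline{R(T)}^{\,\|\cdot\|}\bigr)=J_{X}X$, because $R(T^{**})\supseteq T^{**}(J_{X}X)=J_{X}(R(T))$. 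Now use the canonical decomposition $X^{***}=J_{X^{*}}X^{*}\oplus(J_{X}X)^{\perp}$: a short check shows $T^{***}$ is block-diagonal for it, acting as $T^{*}$ on $J_{X^{*}}X^{*}$ and as $(\widehat{T^{**}})^{*}$ on $(J_{X}X)^{\perp}\cong(X^{**}/X)^{*}$, where $\widehat{T^{**}}\colon X^{**}/X\to X^{**}/X$ is the operator induced by $T^{**}$ (note $T^{**}$ maps $J_{X}X$ into itself). Hence $N(T^{***})=J_{X^{*}}\bigl(N(T^{*})\bigr)\oplus N\bigl((\widehat{T^{**}})^{*}\bigr)$, and since $N(T^{*})=\{0\}$ it remains only to show that $R(\widehat{T^{**}})$ is norm-dense in $X^{**}/X$.

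This last step is the one that genuinely uses the Grothendieck property and is the main obstacle: the operator $T=\operatorname{diag}(1/n)$ on $c_{0}$ has dense range but $R(\widehat{T^{**}})=\{0\}$ in $c_{0}^{**}/c_{0}$, so mere density of $R(T)$ cannot suffice. I would argue by contradiction. If $R(\widehat{T^{**}})$ were not dense, pick $0\neq\Phi\in(X^{**}/X)^{*}$ annihilating it; viewed inside $X^{***}$, $\Phi$ lies in $(J_{X}X)^{\perp}\cap N(T^{***})$. The aim is then to extract from $\Phi$, using also the $\sigma(X^{**},X^{*})$-density of $R(T^{**})$, a bounded sequence $(x_{n}^{*})$ in $X^{*}$ that is $\sigma(X^{*},X)$-null but not $\sigma(X^{*},X^{**})$-null, contradicting the Grothendieck property in its sequential form (every $w^{*}$-null sequence in $X^{*}$ is weakly null); alternatively one may simply invoke this density statement from \cite{Bermudez}. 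I expect that turning the single ``singular'' functional $\Phi$ into such an offending sequence is the only delicate point, the rest being routine manipulation of adjoints and annihilators.
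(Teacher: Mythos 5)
The paper does not actually prove this proposition: it is quoted from \cite{Bermudez} (Berm\'udez--Kalton) with no internal argument, so there is nothing in the paper to compare your proof against step by step. Judged on its own terms, your proposal is correct exactly up to the point where the Grothendieck property has to do some work, and that is where it stops. The easy implication via $T^{***}J_{X^{*}}=J_{X^{*}}T^{*}$ is complete; the decomposition $X^{***}=J_{X^{*}}X^{*}\oplus (J_{X}X)^{\perp}$, the invariance of both summands under $T^{***}$, the identification of the diagonal blocks with $T^{*}$ and $(\widehat{T^{**}})^{*}$, and the reduction to norm-density of $R(\widehat{T^{**}})$ in $X^{**}/X$ are all correct, and your $\operatorname{diag}(1/n)$ example on $c_{0}$ rightly locates where the hypothesis must enter. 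But every step you actually carry out is formal bookkeeping that uses nothing about $X$.

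The substantive step --- deriving a contradiction from a nonzero $\Phi\in (J_{X}X)^{\perp}\cap N(T^{***})$ --- is only announced, not executed (``the aim is then to extract\dots'', ``I expect \dots is the only delicate point''). That is not a loose end; it is the entire content of the proposition. Moreover the announced strategy is not obviously viable as stated: a norm-continuous, non-$w^{*}$-continuous functional on $X^{**}$ exists for every non-reflexive $X$, Grothendieck or not, so $\Phi$ by itself cannot produce an offending sequence; you must use the $w^{*}$-density of $R(T^{**})$, and the Grothendieck property is a sequential condition whereas Goldstine-type density only supplies bounded nets --- in a non-separable space such as $l^{\infty}$ there is no general mechanism for replacing those nets by sequences. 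So the passage from the single singular functional $\Phi$ to a bounded $w^{*}$-null, non-weakly-null sequence in $X^{*}$ is a genuine gap, not a routine verification. Your fallback of ``simply invoking'' the density statement from \cite{Bermudez} would make the preceding reduction pointless, since what you would be citing is essentially the proposition itself (which is exactly what the paper does); a self-contained proof requires supplying this missing step.
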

%
% ---------- Lemma ----------------------------------------------
%
\begin{lemma}(\textbf{Spectral lemma}, \cite{A.N.}, \cite{G.C.})\label{spectral-lemma}
Let $X$ be a Banach space and consider $T\in L(X)$.
\begin{itemize}\item[\textnormal{(i)}] Assume there exists a vector $x\in X$ such that $J_{T}(x)$ has non-empty interior. Then for every
$\lambda\in\mathbb{C}$ with $|\lambda|\leq 1$ the operator $T-\lambda I$  has dense range. This is equivalent to say that \[\sigma_{p}(T^{*})\cap\overline{\mathbb{D}}=\emptyset.\]
\item[\textnormal{(ii)}] Assume there exists a non-zero vector $x\in X$ such that $J_{T}(x)$ has non-empty interior. Then \[\sigma(T)\cap\partial\mathbb{D}\neq\emptyset.\]
\end{itemize} 
\end{lemma}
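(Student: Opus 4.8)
The plan is to treat the two parts separately: part (i) reduces, after a duality reformulation, to excluding eigenvalues of $T^{*}$ in $\overline{\mathbb{D}}$, which I would handle by a direct computation on the $J$-set; part (ii) I would obtain from a Riesz spectral decomposition of $T$. For part (i), first note that the standard fact ``a bounded operator $S$ has dense range if and only if $S^{*}$ is injective'', applied to $S=T-\lambda I$ for each $|\lambda|\le 1$, already shows that the two formulations in the statement are equivalent, so it suffices to prove $\sigma_{p}(T^{*})\cap\overline{\mathbb{D}}=\emptyset$. Suppose not, say $T^{*}\phi=\lambda\phi$ with $\phi\ne 0$ and $|\lambda|\le 1$, so $\phi(T^{k}z)=\lambda^{k}\phi(z)$ for all $z$ and $k$. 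For any $y\in J_{T}(x)$ with witnesses $x_{n}\to x$, $k_{n}\uparrow\infty$, $T^{k_{n}}x_{n}\to y$, passing to the limit in $\phi(T^{k_{n}}x_{n})=\lambda^{k_{n}}\phi(x_{n})$ gives $|\phi(y)|=\lim_{n}|\lambda|^{k_{n}}|\phi(x_{n})|$; since $|\phi(x_{n})|\to|\phi(x)|$, this limit equals $0$ when $|\lambda|<1$ and equals $|\phi(x)|$ when $|\lambda|=1$, in either case a constant $c\ge 0$ independent of $y$. Hence $J_{T}(x)\subseteq\{y:|\phi(y)|=c\}$, and I would finish by observing that such a level set has empty interior: if it contained a ball $B(y_{0},r)$, then $z\mapsto|\phi(y_{0})+z|$ would be constant on the open disc $\{\phi(w):\|w\|<r\}$, which has positive radius since $\phi\ne 0$ --- impossible. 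This contradicts the hypothesis on $J_{T}(x)$.

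For part (ii), assume $x\ne 0$, that $J_{T}(x)$ has non-empty interior, and --- for contradiction --- that $\sigma(T)\cap\partial\mathbb{D}=\emptyset$. Then $\sigma(T)$ splits as a disjoint union of the two compact sets $\sigma_{1}=\sigma(T)\cap\mathbb{D}$ and $\sigma_{2}=\sigma(T)\setminus\overline{\mathbb{D}}$, and the Riesz functional calculus gives $X=X_{1}\oplus X_{2}$ with $T$-invariant closed summands, bounded projections $P_{1},P_{2}$ commuting with $T$, and $\sigma(T|_{X_{j}})=\sigma_{j}$. Since the spectral radius of $T|_{X_{1}}$ is $<1$ (when $X_{1}\ne\{0\}$), $\|(T|_{X_{1}})^{k}\|\to 0$, so for $y\in J_{T}(x)$ we get $P_{1}y=\lim_{n}(T|_{X_{1}})^{k_{n}}(P_{1}x_{n})=0$; thus $J_{T}(x)\subseteq X_{2}$. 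If $X_{1}\ne\{0\}$ then $X_{2}$ is a proper closed subspace, which contains no ball, contradicting the non-empty interior of $J_{T}(x)$. Hence $X_{1}=\{0\}$, i.e.\ $\sigma(T)\subseteq\{|z|>1\}$, so $T$ is invertible with $\sigma(T^{-1})\subseteq\mathbb{D}$ and $\|T^{-k}\|\to 0$. Taking any $y\in J_{T}(x)$ (non-empty by hypothesis) with witnesses as above and applying $T^{-k_{n}}$ to $T^{k_{n}}x_{n}$ gives $x_{n}=T^{-k_{n}}(T^{k_{n}}x_{n})$, and since $\|x_{n}-T^{-k_{n}}y\|\le\|T^{-k_{n}}\|\,\|T^{k_{n}}x_{n}-y\|\to 0$ while $\|T^{-k_{n}}y\|\le\|T^{-k_{n}}\|\,\|y\|\to 0$, we obtain $x_{n}\to 0$, hence $x=0$, a contradiction.

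The main subtlety is in part (i): one must not argue merely that $\phi$ is bounded on $J_{T}(x)$ (every functional is bounded on bounded sets), but rather extract that $|\phi|$ is \emph{constant} there, and then exploit that a level set of $|\phi|$ is nowhere dense. In part (ii) the only point requiring care is to arrange the spectral splitting so that both degenerate configurations --- $\sigma(T)$ lying entirely inside, respectively entirely outside, $\overline{\mathbb{D}}$ --- are covered; everything else is routine (commutation of the Riesz projections with $T$ and the spectral radius formula).
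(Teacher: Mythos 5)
Your proof is correct, and since the paper states this lemma only by citation to \cite{A.N.} and \cite{G.C.} without reproducing an argument, the relevant comparison is with those sources: your reasoning is essentially the standard one found there --- for (i) the eigenfunctional identity $\phi(T^{k}z)=\lambda^{k}\phi(z)$ forcing $J_{T}(x)$ into a level set of $|\phi|$ (with the correct observation that such a level set, not merely a bounded set, is what has empty interior), and for (ii) the Riesz decomposition along $\partial\mathbb{D}$ followed by the $\|T^{-k}\|\to 0$ argument in the purely-exterior case. Both degenerate configurations in (ii) are handled, so there is nothing to add.
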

%
% ---------- Theorem ----------------------------------------------
%
\begin{theorem}\label{C(K) spectrum} Consider $C(K)$ where $K$ is a compact Hausdorff space. Then the following assertions are equivalent.
\begin{itemize}
\item[\textnormal{(i)}] $C(K)$ is a Grothendieck space (e.g. $l^{\infty}$ or $l^{\infty}/c_0$).
\item[\textnormal{(ii)}] For every $T\in L(C(K))$ we have
\[\partial\sigma(T)\subset\sigma_{p}(T^{*}).\]
\end{itemize}
In both cases we have that \[\overline{\mathbb{D}}\subset\sigma(T)\] holds for every operator $T\in L(C(K))$ with $(J_{T}(x))^{\circ}\neq\emptyset$ for some $x\neq 0$. In particular there does not exist an invertible $J$-class operator.
\end{theorem}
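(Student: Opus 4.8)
The plan is to prove the two implications of the equivalence separately and then to read off the inclusion $\overline{\mathbb{D}}\subset\sigma(T)$ and the non-invertibility of $J$-class operators from $(ii)$ and the Spectral Lemma. For $(i)\Rightarrow(ii)$, fix $T\in L(C(K))$ and $\lambda\in\partial\sigma(T)$, put $R:=T-\lambda I$, and recall that, $\sigma(T)$ being closed, $\lambda\in\partial\sigma(T)$ means exactly that $R$ is non-invertible while $\lambda$ is a limit of points of $\rho(T)$. If $\lambda\notin\sigma_{ess}(T)$, then $R$ is Fredholm; since arbitrarily close to $0$ there are $\mu$ with $R-\mu I$ invertible and the Fredholm index is locally constant, $\operatorname{ind}R=0$, whence $\dim N(R^{*})=\dim N(R)\geq 1$ and $\lambda\in\sigma_{p}(T^{*})$. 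The substantial case is $\lambda\in\sigma_{ess}(T)$: then $\lambda\in\sigma_{ess}(T)\subset\sigma(T)$ together with $\lambda\in\partial\sigma(T)$ gives $\lambda\in\partial\sigma_{ess}(T)=\partial\sigma_{ess}(T^{*})$ (the essential spectrum is invariant under taking adjoints). Since $C(K)^{*}$ is an abstract $L_{1}$-space, hence isometric to some $L_{1}(\mu)$, part $(ii)$ of the Proposition on operators on $L_{1}(\Omega,\Sigma,\mu)$ above applies to $T^{*}$ and shows that $R^{*}=T^{*}-\lambda I$ is not tauberian; unwinding the definition, there is $\xi\in C(K)^{***}\setminus C(K)^{*}$ with $R^{***}\xi\in C(K)^{*}$.

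Now the hypothesis that $C(K)$ \emph{itself} is a Grothendieck space enters. Because $C(K)^{*}$ is a dual space it is the range of the canonical projection of $C(K)^{***}$ onto $C(K)^{*}$; and since $R^{***}=(R^{**})^{*}$ is the adjoint of $R^{**}$, which leaves $C(K)\subset C(K)^{**}$ invariant, the complementary summand $N$ --- the annihilator of $C(K)$ inside $C(K)^{***}$ --- is $R^{***}$-invariant as well, so $R^{***}$ is block-diagonal with respect to $C(K)^{***}=C(K)^{*}\oplus N$ and acts as $R^{*}$ on the first summand. Writing $\xi=z+\eta$ with $z\in C(K)^{*}$ and $0\neq\eta\in N$, the relation $R^{***}\xi\in C(K)^{*}$ forces $R^{***}\eta\in C(K)^{*}\cap N=\{0\}$, so $0\neq\eta\in N(R^{***})$ and $\lambda\in\sigma_{p}(T^{***})$; then Proposition~\ref{Grothendieck}, applied to $R\in L(C(K))$, gives $N(R^{*})\neq\{0\}$, i.e.\ $\lambda\in\sigma_{p}(T^{*})$. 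I expect this last stretch --- turning ``$R^{*}$ not tauberian'' into an honest eigenvector of $T^{*}$, via the canonical complementation of the dual and Proposition~\ref{Grothendieck} --- to be the main obstacle.

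For $(ii)\Rightarrow(i)$ I would argue contrapositively: if $C(K)$ is not Grothendieck it contains a complemented copy of $c_{0}$, say $C(K)=c_{0}\oplus Y$, and taking $T$ to act as the backward shift on $c_{0}$ and as $0$ on $Y$ one gets $\sigma(T)=\overline{\mathbb{D}}$, hence $\partial\sigma(T)=\partial\mathbb{D}$, while $T^{*}$ is the eigenvalue-free forward shift on $l^{1}$ direct-summed with $0$, so $\sigma_{p}(T^{*})\subseteq\{0\}$ and $(ii)$ fails. Finally, assuming $(ii)$, let $T\in L(C(K))$ satisfy $(J_{T}(x))^{\circ}\neq\emptyset$ for some $x\neq 0$: by Lemma~\ref{spectral-lemma}$(i)$ we have $\sigma_{p}(T^{*})\cap\overline{\mathbb{D}}=\emptyset$, hence by $(ii)$ also $\partial\sigma(T)\cap\overline{\mathbb{D}}=\emptyset$, while Lemma~\ref{spectral-lemma}$(ii)$ supplies $\mu_{0}\in\sigma(T)\cap\partial\mathbb{D}$. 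If some $\nu\in\overline{\mathbb{D}}$ were in $\rho(T)$, then along the segment from $\nu$ to $\mu_{0}$, which lies in the convex set $\overline{\mathbb{D}}$, the first parameter at which it meets the closed set $\sigma(T)$ would yield a point of $\partial\sigma(T)$ inside $\overline{\mathbb{D}}$, a contradiction; hence $\overline{\mathbb{D}}\subseteq\sigma(T)$, and in particular $0\in\sigma(T)$. Since a $J$-class operator has $J_{T}(x)=C(K)$, so $(J_{T}(x))^{\circ}=C(K)\neq\emptyset$, for some $x\neq 0$, it follows that no $J$-class operator on $C(K)$ is invertible.
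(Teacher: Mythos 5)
Your proof is correct, and its skeleton coincides with the paper's: identify $C(K)^{*}$ with an $L_{1}(\mu)$-space, invoke part (ii) of the proposition on operators on $L_{1}(\Omega,\Sigma,\mu)$ to see that $T^{*}-\lambda$ is not tauberian for $\lambda\in\partial\sigma_{ess}(T)=\partial\sigma_{ess}(T^{*})$, and then use Proposition~\ref{Grothendieck} to descend from $N(R^{***})\neq\{0\}$ to $N(R^{*})\neq\{0\}$. You differ in three local steps, each legitimate and each more self-contained than the paper's. First, to convert ``not tauberian'' into a nonzero element of $N(R^{***})$ you use the Dixmier decomposition $C(K)^{***}=C(K)^{*}\oplus C(K)^{\perp}$ together with the block-diagonality of $R^{***}$; the paper instead quotes the characterization that an operator whose image of the closed unit ball is closed (as $R^{*}(B_{C(K)^{*}})$ always is, being weak*-compact) is tauberian iff its kernel equals that of its second adjoint. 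Second, for $\lambda\in\partial\sigma(T)\setminus\sigma_{ess}(T)$ you run a Fredholm-index argument, whereas the paper cites the fact that such a $\lambda$ is an isolated spectral point and an eigenvalue of $T^{*}$. Third, and most visibly, for $(ii)\Rightarrow(i)$ the paper proves the stronger statement that a Banach space satisfying $(ii)$ admits no complemented separable infinite-dimensional subspace at all, by building a compact dense-range operator $V$ on such a subspace with $0\in\partial\sigma(V\oplus I)\setminus\sigma_{p}(V^{*}\oplus I)$; your backward shift on a complemented copy of $c_{0}$ is more concrete and suffices, since Cembranos' theorem only requires ruling out complemented copies of $c_{0}$. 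Your segment argument at the end just makes explicit the connectedness step the paper leaves to the reader, so overall the two proofs buy the same theorem, yours trading a few external citations for short direct verifications and the paper's counterexample yielding a slightly more general structural fact along the way.
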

\begin{proof} $(i) \Rightarrow (ii)$ Note that $(C(K))^{*}$ is isomorphic to an $AL$-space and hence can be written  as $L_1(\Omega,\Sigma,\mu)$ for some measure space $(\Omega,\Sigma,\mu)$, see \cite{Abramovich}, p. 95 . Suppose first that $\lambda\in\partial\sigma(T)$ and $\lambda\in\sigma_{ess}(T)$. Then $\lambda\in\partial\sigma_{ess}(T)=\partial\sigma_{ess}(T^{*})$ holds and therefore by the above theorem we know that $S^{*}$ is not tauberian, where $S:=T-\lambda I$. Hence $N(S^{***})\neq N(S^{*})$ since $S^{*}(B_{X^{*}})$ is always closed, see \cite{M.G.}, p. 12.  Furthermore since $C(K)$ has the Grothendieck property by assumption, it follows by Proposition \ref{Grothendieck} that $N(S^{*})\neq \{0\}$ holds. If  $\lambda\in\partial\sigma(T)=\partial\sigma(T^{*})$ but not in $\sigma_{ess}(T)=\sigma_{ess}(T^{*})$ then it is isolated and an eigenvalue of $T^{*}$, see \cite{Abramovich}, p. 300 /301. To show that $(ii)\Rightarrow (i)$, consider first an arbitrary Banach space $X$ with the property that every $T\in L(X)$ satisfies $\partial\sigma(T)\subset\sigma_{p}(T^{*})$. We will show that $X$ has no  complemented separable subspaces. Assume the contrary and decompose $X=M\oplus N$, where $M$ is separable. Choose now a bounded sequence $(x_n, x^{*}_n)\subset M\times M^*$ with the property that  the linear span of $x_n$ lies dense in $M$ and $x^*_n(x_m)=\delta_{n,m}$. Define now the compact operator on $M$ via $K(x):=\sum_{n=1}^{\infty}\frac{1}{2^n}x_n^{*}(x)x_n$. Then $\sigma(K)=\{0\}\cup\{\lambda_k\}_{k\in\mathbb{N}}$ ($\lambda_k$ are possible eigenvalues of $K$), but $0\notin\sigma_{p}(K^{*})$ since $K$ has dense range. Consider the operator $T$ on $X$ defined by $T=K\oplus I$, where $I$ is the identity on $N$. Then we have $\partial\sigma(T)=\sigma(T)=\{0,1\}\cup\{\lambda_{k}\}_{k\in\mathbb{N}}$ and on the other side $0\notin\sigma_{p}(K^{*}\oplus I)=\sigma_p(T^{*})$, a contradiction. In particular in the case where $X=C(K)$ any copy of $c_0$ cannot be complemented. By Cor. 2 in \cite{P.C.} it follows that $C(K)$ has the Grothendieck property.\\
Assume now that there exists a vector $x\in C(K)$, such that $(J_{T}(x))^{\circ}\neq\emptyset$ and $C(K)$ has the Grothendieck property. By the Spectral Lemma we have $\sigma_{p}(T^{*})\cap\overline{\mathbb{D}}=\emptyset\quad (*)$ and $\sigma(T)\cap\partial\mathbb{D}\neq\emptyset\quad (**)$. Hence we get from the above and $(*)$ that $\partial\sigma(T)\cap\overline{\mathbb{D}}=\emptyset$ holds and because of $(**)$ and an easy connectedness argument we conclude that $\overline{\mathbb{D}}\subset\sigma(T)$.%
\end{proof}
%
% ---------- Section ----------------------------------------------
%
\section{Characterization of $J$-Class operators which are adjoints} 
\begin{definition} Let $X$ be a Banach space and $T\in L(X)$. The \textit{injectivity modulus} (sometimes called the minimum modulus) is defined as
\[\kappa(T):=\inf\{\left\|Tx\right\|:x\in X, \left\|x\right\|=1\}.\]
We define further
\[i(T):=\lim\limits_{n\rightarrow\infty}\kappa(T^{n})^{\frac{1}{n}}.\]
The \textit{surjectivity modulus} of $T$ is defined as 
\[s(T):= \sup\{ r\geq0: T(B_{X})\supset r\cdot B_{X}\},\]
where $B_{X}$ denotes the closed unit ball.
Moreover we define
\[\delta(T):=\lim\limits_{n\rightarrow\infty}s(T^{n})^{\frac{1}{n}}.\]
\end{definition}
%
% ---------- Remark ----------------------------------------------
%
\begin{remark}\label{value-remark} The following inequalities hold \[\kappa(ST)\geq\kappa(S)\kappa(T) \text{ and } s(ST)\geq s(S)s(T),\]  
see  \cite{V.M.}, p. 82.
\end{remark}
%
% ---------- Lemma ----------------------------------------------
%
For the set $L_{A}^{W}(l^{\infty}):=\{T\in L(l^{\infty})\ |\  T=S^{*}+W, \text{ where } S\in L(l^1)\text{ and } W\in L(l^\infty) \text{ weakly compact}\}$ we have the following lemma stated and proved in \cite{A.N.2}. 
\begin{lemma} \cite{A.N.2} \label{spectral-lemma2} Let $T\in L_{A}^{W}(l^\infty)$. Then we have that
\[\sigma_{a}(T^{*})=\sigma_{p}(T^{*})\] holds. If in addition $T$ satisfies the statement of the Spectral Lemma, then $\overline{\mathbb{D}}\subset\sigma_{p}(T)$ holds.
\end{lemma}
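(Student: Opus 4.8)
The plan is to split the lemma into its two assertions. For the first, the inclusion $\sigma_p(T^*)\subset\sigma_a(T^*)$ is trivial, so the content is $\sigma_a(T^*)\subset\sigma_p(T^*)$: every approximate point of the spectrum of $T^*$ is actually an eigenvalue. I would exploit the structure $T=S^*+W$ with $S\in L(l^1)$ and $W$ weakly compact on $l^\infty$. Taking adjoints, $T^*=S^{**}+W^*$, where $S^{**}$ acts on $(l^1)^{**}=(l^\infty)^*$ and $W^*$ is weakly compact. Now fix $\lambda\in\sigma_a(T^*)$ and suppose $\lambda\notin\sigma_p(T^*)$, i.e. $T^*-\lambda$ is injective; since $\lambda$ is an approximate eigenvalue, $T^*-\lambda$ is not bounded below, hence has non-closed range. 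The key point is that $S^{**}-\lambda$ has closed range whenever $S^*-\lambda$ does: tauberian-type/biadjoint arguments (as in \cite{M.G.}, and used already in the proof of Theorem \ref{C(K) spectrum}) show that $(S^*)^*=S^{**}$ inherits closed-range behaviour from $S$ on a separable $L_1$-piece, and the weakly compact perturbation $W^*$ does not destroy the relevant semi-Fredholm-type structure because $l^1$ has the Dunford--Pettis property (so squares of weakly compact operators are compact). Assembling these, injectivity of $T^*-\lambda$ forces closed range, contradicting $\lambda\in\sigma_a(T^*)$; hence $\sigma_a(T^*)=\sigma_p(T^*)$.

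For the second assertion, assume in addition that $T$ satisfies the hypothesis of the Spectral Lemma, i.e. there is $x\neq 0$ with $(J_T(x))^{\circ}\neq\emptyset$. By Lemma \ref{spectral-lemma}(i) this gives $\sigma_p(T^*)\cap\overline{\mathbb{D}}=\emptyset$, equivalently $T-\lambda$ has dense range for all $|\lambda|\le 1$. Combining with the first part, $\sigma_a(T^*)\cap\overline{\mathbb{D}}=\emptyset$ as well, so $T^*-\lambda$ is bounded below for every $\lambda\in\overline{\mathbb{D}}$; in particular $T-\lambda$ has closed range, and together with density of the range this makes $T-\lambda$ surjective for all $|\lambda|\le 1$. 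Now I would argue that surjectivity of $T-\lambda$ together with the Spectral-Lemma hypothesis forces non-injectivity: the Spectral Lemma (ii) gives $\sigma(T)\cap\partial\mathbb{D}\neq\emptyset$, and since $T-\lambda$ is onto throughout $\overline{\mathbb{D}}$, any spectral value in $\overline{\mathbb{D}}$ must come from a non-trivial kernel of $T-\lambda$; a connectedness argument (exactly as in the last lines of the proof of Theorem \ref{C(K) spectrum}, pushing the boundary point of $\sigma(T)$ off $\overline{\mathbb{D}}$) then yields $\overline{\mathbb{D}}\subset\sigma(T)$, and since $T-\lambda$ is surjective on this disc, each such $\lambda$ is an eigenvalue of $T$. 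Hence $\overline{\mathbb{D}}\subset\sigma_p(T)$.

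I expect the main obstacle to be the first assertion, specifically the closed-range transfer from $S$ to $S^{**}+W^*$: one must be careful that "$T^*-\lambda$ injective $\Rightarrow$ closed range" really does follow from the $l^1$-structure and weak compactness of $W$, rather than just from abstract nonsense. The cleanest route is to invoke the results on tauberian operators and $T^{co}$ from \cite{M.G.} (as already used for Theorem \ref{C(K) spectrum}): $S^{**}-\lambda$ fails to be tauberian precisely when it is not bounded below, and one shows that on $l^\infty=(l^1)^*$ injectivity plus the Dunford--Pettis property of $l^1$ rules this out for $T^*=S^{**}+W^*$. Since this computation is carried out in the cited \cite{A.N.2}, I would present it as a reduction to that reference, sketching only the perturbation argument (Remark \ref{value-remark} controls the relevant moduli under composition) and the reason the weakly compact part is harmless. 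The second assertion is then essentially bookkeeping with the Spectral Lemma and the same connectedness trick used earlier, so it should go through without new difficulty.
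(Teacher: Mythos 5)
The paper offers no proof of this lemma at all: it is imported verbatim from \cite{A.N.2}, so there is no internal argument to compare against. Your handling of the second assertion is correct and is exactly how the lemma gets used in Proposition \ref{pre-main-equivalence}: Spectral Lemma (i) gives $\sigma_p(T^*)\cap\overline{\mathbb{D}}=\emptyset$, the first identity upgrades this to $\sigma_a(T^*)\cap\overline{\mathbb{D}}=\emptyset$, hence $T-\lambda$ is surjective for every $\lambda\in\overline{\mathbb{D}}$, and the connectedness argument already used at the end of Theorem \ref{C(K) spectrum} forces $\overline{\mathbb{D}}\subset\sigma(T)$, so each such $\lambda$ is an eigenvalue. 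That part would survive scrutiny.

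The sketch of the first identity, however, has a genuine gap. The step you label the ``key point'' --- that $S^{**}-\lambda$ has closed range whenever $S^*-\lambda$ does --- is true for any operator (closed range passes between an operator and its adjoints) but does no work here, because nothing in your argument establishes that $S-\lambda$ has closed range; that is essentially what must be proved. The crux is to show that an \emph{injective} $T^*-\lambda$ on $(l^\infty)^*$ is automatically bounded below, and the tauberian machinery of Section 2 that you propose to recycle only produces eigenvalues of $T^*$ at points of $\partial\sigma(T)$ (via $\partial\sigma_{ess}$, the closedness of $(T^*-\lambda)(B_{X^*})$, and Proposition \ref{Grothendieck}); but $\sigma_a(T^*)$ is the full surjectivity spectrum of $T$, which for the operators of Section 3 (weighted shifts) is an annulus containing interior points of $\sigma(T)$, so it is in general strictly larger than $\partial\sigma(T)$. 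An argument that works must exploit the specific form $T^*=S^{**}+W^*$ more seriously --- for instance by extracting a weak$^*$ cluster point of an approximate eigenvector sequence, using the Grothendieck and Dunford--Pettis properties to neutralize $W^*$ and upgrade weak$^*$ to weak convergence, and using the $l^1$-decomposition of $(l^\infty)^*$ together with the Schur property to see the cluster point is a non-zero eigenvector --- none of which appears in your sketch. Deferring the first identity wholesale to \cite{A.N.2} is legitimate (the paper does exactly that), but the bridge you outline would not survive being written out.
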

%
% ---------- Proposition ----------------------------------------------
%
\begin{proposition}\label{pre-main-equivalence} Consider the operator $T\in L_{A}^{W}(l^{\infty})$. Then the following statements are equivalent.
\begin{itemize}
\item[\textnormal{(i)}] $T$ is $J^{mix}$-class.
\item[\textnormal{(ii)}] $T$ is $J$-class.
\item[\textnormal{(iii)}] $J_{T}(x)$ has non-empty interior for some $x\neq0$.
\item[\textnormal{(iv)}] $\sigma(T)\cap\partial\mathbb{D}\neq\emptyset$ and $\sigma_{p}(T^{*})\cap\overline{\mathbb{D}}=\emptyset$. 
\item[\textnormal{(v)}] $i(T^{*})>1$ and $\overline{\mathbb{D}}\subseteq\sigma_{p}(T)$.
\item[\textnormal{(vi)}] $i(T^{*})>1$ and $0\in\sigma_{p}(T)$.
\end{itemize}
\end{proposition}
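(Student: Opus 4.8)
The plan is to establish the cyclic chain $(i)\Rightarrow(ii)\Rightarrow(iii)\Rightarrow(iv)\Rightarrow(v)\Rightarrow(vi)\Rightarrow(i)$. The implications $(i)\Rightarrow(ii)\Rightarrow(iii)$ and $(v)\Rightarrow(vi)$ are immediate from the definitions: the $J^{mix}$-set is contained in the ordinary $J$-set, so a $J^{mix}$-class operator is $J$-class; if $J_{T}(x)=l^{\infty}$ for some $x\neq0$ then this set has non-empty interior; and $\overline{\mathbb{D}}\subseteq\sigma_{p}(T)$ forces $0\in\sigma_{p}(T)$, while $i(T^{*})>1$ is common to $(v)$ and $(vi)$.

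For $(iii)\Rightarrow(iv)$ I would simply quote the Spectral Lemma (Lemma \ref{spectral-lemma}): part (i) gives $\sigma_{p}(T^{*})\cap\overline{\mathbb{D}}=\emptyset$ and part (ii) gives $\sigma(T)\cap\partial\mathbb{D}\neq\emptyset$. The step $(iv)\Rightarrow(v)$ is a repackaging of the available facts. By Lemma \ref{spectral-lemma2} one has $\sigma_{a}(T^{*})=\sigma_{p}(T^{*})$, so $(iv)$ gives $\sigma_{a}(T^{*})\cap\overline{\mathbb{D}}=\emptyset$; since $\overline{\mathbb{D}}$ is compact and $\sigma_{a}(T^{*})$ closed, this means $\text{dist}(0,\sigma_{a}(T^{*}))>1$, and by the minimum-modulus formula $i(T^{*})=\text{dist}(0,\sigma_{a}(T^{*}))$ (see \cite{V.M.}) we obtain $i(T^{*})>1$. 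For the second half of $(v)$, recall that $\lambda\notin\sigma_{a}(T^{*})$ exactly when $T-\lambda$ is surjective, so $T-\lambda$ is onto for every $|\lambda|\leq1$; moreover $\partial\sigma(T)=\partial\sigma(T^{*})\subseteq\sigma_{a}(T^{*})$ is disjoint from $\overline{\mathbb{D}}$, so connectedness of $\overline{\mathbb{D}}$ together with $\sigma(T)\cap\partial\mathbb{D}\neq\emptyset$ gives $\overline{\mathbb{D}}\subseteq\sigma(T)$. For $\lambda\in\overline{\mathbb{D}}$ the operator $T-\lambda$ is then onto but not invertible, hence not injective, i.e. $\lambda\in\sigma_{p}(T)$; thus $\overline{\mathbb{D}}\subseteq\sigma_{p}(T)$.

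The substantive implication is $(vi)\Rightarrow(i)$. From the duality $\kappa((T^{n})^{*})=s(T^{n})$ (see \cite{V.M.}) and the super-multiplicativity of $s$ (Remark \ref{value-remark}), Fekete's lemma shows the limit $\delta(T)=\lim_{n}s(T^{n})^{1/n}=i(T^{*})$ exists and is $>1$; fix $\rho>1$ and $N$ with $s(T^{n})\geq\rho^{n}$ for all $n\geq N$. Then for every $y\in l^{\infty}$ and every $n\geq N$ there is $w_{n}\in l^{\infty}$ with $T^{n}w_{n}=y$ and $\|w_{n}\|\leq\|y\|\rho^{-n}$. Pick $u\neq0$ with $Tu=0$ (possible since $0\in\sigma_{p}(T)$) and set $x_{n}:=u$ for $n<N$ and $x_{n}:=u+w_{n}$ for $n\geq N$. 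Then $x_{n}\to u\neq0$, while $T^{n}x_{n}=T^{n}u+T^{n}w_{n}=y$ for all $n\geq N$, so $T^{n}x_{n}\to y$. Hence $y\in J_{T}^{mix}(u)$; as $y$ was arbitrary, $J_{T}^{mix}(u)=l^{\infty}$ and $T$ is $J^{mix}$-class, closing the cycle.

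The final construction is short once one has both a geometric lower bound $s(T^{n})\geq\rho^{n}$ and a non-zero vector annihilated by $T$, so the real work — and the place where an error is most likely — is $(iv)\Rightarrow(v)$: one must correctly tie the injectivity modulus of $T^{*}$ to the location of $\sigma_{a}(T^{*})=\sigma_{p}(T^{*})$ (this is exactly where Lemma \ref{spectral-lemma2}, the minimum-modulus formula of \cite{V.M.}, and the compactness of $\overline{\mathbb{D}}$ all enter, the last turning ``disjoint from $\overline{\mathbb{D}}$'' into ``distance $>1$''), and then run the connectedness argument upgrading $\sigma(T)\cap\partial\mathbb{D}\neq\emptyset$ to $\overline{\mathbb{D}}\subseteq\sigma_{p}(T)$. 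If one instead reads Lemma \ref{spectral-lemma2} as already supplying $\overline{\mathbb{D}}\subseteq\sigma_{p}(T)$ whenever the conclusions of the Spectral Lemma hold, that connectedness step can be omitted and $(iv)\Rightarrow(v)$ becomes essentially automatic.
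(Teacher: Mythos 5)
Your proof is correct and follows essentially the same route as the paper: the same cycle of implications, the Spectral Lemma for $(iii)\Rightarrow(iv)$, Lemma \ref{spectral-lemma2} together with the minimum-modulus formula $i(R)=\operatorname{dist}(0,\sigma_a(R))$ for $(iv)\Rightarrow(v)$, and the surjectivity-modulus construction combined with a nonzero kernel vector for $(vi)\Rightarrow(i)$. The only noteworthy deviations are that you re-derive $\overline{\mathbb{D}}\subseteq\sigma_{p}(T)$ by a connectedness argument plus the duality $\sigma_{su}(T)=\sigma_a(T^{*})$ instead of quoting the second assertion of Lemma \ref{spectral-lemma2}, and that in $(vi)\Rightarrow(i)$ you use $s(T^{n})\geq\rho^{n}$ for all large $n$ and center the approximating sequence at a kernel vector $u$, which gives convergence of $T^{n}x_{n}$ along the full sequence of integers directly and thus avoids the paper's slightly loose passage through the subsequence $(mn_{0})$.
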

\begin{proof}The implications $(i)\Rightarrow (ii)\Rightarrow (iii)$ are clear and $(iii)\Rightarrow (iv)$ is the statement of the Spectral Lemma. The direction $(v)\Rightarrow (vi)$ is trivial. 
Let us prove that $(iii)$ implies $(iv)$. First observe that by Lemma \ref{spectral-lemma2}  and the Spectral Lemma we get that \[\sigma_{a}(T^{*})\cap\overline{\mathbb{D}}=\sigma_{p}(T^{*})\cap\overline{\mathbb{D}}=\emptyset\text{ and } \overline{\mathbb{D}}\subseteq\sigma_{p}(T).\] So we have just to show that $i(T^{*})>1$. Now for any bounded operator $R$ on a complex Banach space $X$ we have that 
\[\text{dist}(\{0\},\sigma_{a}(R))=i(R)\]
holds, see \cite{V.M.}, p. 91. Since $\sigma_a(T^{*})$ is closed this implies
\[1<\text{dist}(0,\sigma_{a}(T^{*}))=i(T^{*}).\] 
This shows that $(iv)$ implies $(v)$. Assume that $(vi)$ holds. For arbitrarily operators $T$ on Banach spaces we have the following relation:
\[i(T^{*})=\lim\limits_{n\rightarrow\infty} \kappa((T^{*})^{n})^{\frac{1}{n}}=\lim\limits_{n\rightarrow\infty} s((T)^{n})^{\frac{1}{n}}=\delta(T),\]
where we used the fact that $\kappa(T^{*})=s(T)$, see \cite{V.M.}, \text{ p. 83}.\\
With our assumption that $i(T^{*})>1$ we can therefore find $n_{0}$ large enough and $\varepsilon>0$, such that 
\[s(T^{n_0})>(1+\varepsilon)^{n_0}.\]

Choose at this point any $y\in l^{\infty}\backslash\{0\}$ and consider $\tilde{y}:=\frac{y}{\left\|y\right\|}$. Looking at the definition of the surjectivity modulus, we can therefore find $x\in l^{\infty}$ with $\left\|x\right\|\leq 1$, such 
that
\[T^{n_0}x=(1+\varepsilon)^{n_0}\tilde{y},\]
or equivalently
\[T^{n_0}\tilde{x}=y,\]
where 
\[\tilde{x}=\frac{\left\|y\right\|}{(1+\varepsilon)^{n_0}}\cdot x\ \text{ and so }\ \left\|\tilde{x}\right\|\leq\frac{\left\|y\right\|}{(1+\varepsilon)^{n_0}}.\]
Define $x_1:=\tilde{x}$, and like above we can find $x_{2}$ with
\[T^{n_0}x_{2}=x_1\ \text{ and }\  \left\|x_{2}\right\|\leq\frac{1}{(1+\varepsilon)^{n_0}}\left\|x_1\right\|.\]
Hence 
\[T^{2n_0}x_2=y\ \text{ and }\  \left\|x_2\right\|\leq \frac{1}{(1+\varepsilon)^{2n_0}}\left\|y\right\|.\]
Inductively we are able to find a sequence $(x_m)_{m}$ in $l^{\infty}$ with the property that
\[T^{mn_0}x_m=y\  \text{ and }\  \left\|x_m\right\|\leq\frac{1}{(1+\varepsilon)^{mn_0}}.\]
With $z_m:=T^{n_0}x_m$ we get in particular \[\lim\limits_{m\rightarrow\infty}z_{m}=0\  \text{ and }\  \lim\limits_{m\rightarrow\infty}T^{m}z_m=y.\] Since $y$ was arbitrarily it follows that $J_{T}^{mix}(0)=l^{\infty}$. Last but not least, take into consideration that $N(T)\neq\{0\}$, which implies together with $J_{T}^{mix}(0)=l^{\infty}$ that there exists some $x\neq 0$, such that $J_{T}^{mix}(x)=l^{\infty}$. This shows $(i)$.
\end{proof}
%
% ---------- Remark ----------------------------------------------
%
\begin{remark} \label{Remarkable-Remark}a) Take into consideration that there are operators for which there exists a non-zero vector $x$ such that $J_{T}(x)$ has non-empty interior, but is not $J$-class, see \cite{Azimi}.\\
$b)$ It follows from the proof of Proposition \ref{pre-main-equivalence} that the direction $(vi)\Rightarrow (i)$ is also valid for arbitrary Banach spaces.
\end{remark}
At this point we will consider for the sake of simplicity those $T\in L(l^{\infty})$ which are adjoints of operators on $l^1$, i.e. $T=S^{*}$, where $S\in L({l^1})$. Of course these $T$ are contained in $L_{A}^{W}(l^\infty)$. Moreover take into consideration that $i(S)=\delta(S^{*})=\delta(T)=i(T^{*})$ which can be easily seen as in the above theorem, since $\kappa(T^{*})=s(T)$ and $\kappa(T)=s(T^{*})$ (see \cite{V.M.}, p. 83). Hence as a direct consequence of Proposition \ref{pre-main-equivalence}  we get the following important corollary:
%
% ---------- Theorem ----------------------------------------------
%
\begin{corollary}\label{main equivalence} Consider the operator $T\in L(l^{\infty})$ which is an adjoint operator, i.e. there exist some $S\in L(l^1)$ with $T=S^{*}$. Then the following statements are equivalent:
\begin{itemize}
\item[\textnormal{(i)}] $T$ is $J^{mix}$-class.
\item[\textnormal{(ii)}] $T$ is $J$-class.
\item[\textnormal{(iii)}] $J_{T}(x)$ has non-empty interior for some non-zero vector $x$.
\item[\textnormal{(iv)}] $\sigma(T)\cap\partial\mathbb{D}\neq\emptyset$ and $\sigma_{p}(T^{*})\cap\overline{\mathbb{D}}=\emptyset$. 
\item[\textnormal{(v)}] $i(S)>1$ and $\overline{\mathbb{D}}\subseteq\sigma_{p}(T)$.
\item[\textnormal{(vi)}] $i(S)>1$ and $0\in\sigma_{p}(T)$.
\end{itemize}
\end{corollary}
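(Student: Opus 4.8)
The plan is to deduce this corollary directly from Proposition \ref{pre-main-equivalence} by specializing to the case $T = S^{*}$ with $S \in L(l^1)$, the only genuine work being the translation of the condition $i(T^{*}) > 1$ appearing in (v) and (vi) of the proposition into the condition $i(S) > 1$ stated here. First I would observe that any adjoint operator $T = S^{*}$ with $S \in L(l^1)$ lies in $L_{A}^{W}(l^{\infty})$: simply take $W = 0$ in the definition of $L_{A}^{W}(l^{\infty})$. Hence Proposition \ref{pre-main-equivalence} applies verbatim, and it already gives the equivalence of (i), (ii), (iii), (iv), together with the equivalence of these to ``$i(T^{*}) > 1$ and $\overline{\mathbb{D}} \subseteq \sigma_{p}(T)$'' and to ``$i(T^{*}) > 1$ and $0 \in \sigma_{p}(T)$.'' So it remains only to replace $i(T^{*})$ by $i(S)$ in the last two statements.

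For this I would use the identities $\kappa(R^{*}) = s(R)$ and $\kappa(R) = s(R^{*})$, valid for any bounded operator $R$ on a Banach space (see \cite{V.M.}, p.~83), which are already invoked in the proof of Proposition \ref{pre-main-equivalence}. Applying the first identity to $R = S^{n}$ and taking $n$-th roots gives
\[
i(S) = \lim_{n\to\infty} \kappa(S^{n})^{1/n} = \lim_{n\to\infty} s((S^{n})^{*})^{1/n} = \lim_{n\to\infty} s((S^{*})^{n})^{1/n} = \delta(S^{*}) = \delta(T),
\]
using $(S^{n})^{*} = (S^{*})^{n} = T^{n}$. On the other hand, exactly the computation carried out in the proof of Proposition \ref{pre-main-equivalence} (namely $i(T^{*}) = \lim_n \kappa((T^{*})^{n})^{1/n} = \lim_n s(T^{n})^{1/n} = \delta(T)$, again via $\kappa(T^{*}) = s(T)$) shows $i(T^{*}) = \delta(T)$ as well. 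Chaining the two gives $i(S) = \delta(T) = i(T^{*})$, so in particular $i(S) > 1 \iff i(T^{*}) > 1$. Substituting this equivalence into parts (v) and (vi) of Proposition \ref{pre-main-equivalence} yields precisely (v) and (vi) of the present corollary, completing the list of equivalences.

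There is essentially no obstacle here: the corollary is a direct specialization, and the only point requiring care is the bookkeeping with adjoints and $n$-th powers in the chain of modulus identities $i(S) = \delta(S^{*}) = \delta(T) = i(T^{*})$ — which is already flagged in the paragraph immediately preceding the corollary statement. If one wanted a fully self-contained argument one could alternatively note that $\sigma_{a}(T^{*}) = \sigma_{a}(S^{**})$ relates to $\sigma_{a}(S)$, but invoking the modulus identities is cleaner and matches the proof style of Proposition \ref{pre-main-equivalence}.
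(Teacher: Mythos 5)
Your proposal is correct and follows essentially the same route as the paper, which likewise notes that $T=S^{*}$ lies in $L_{A}^{W}(l^{\infty})$ (with $W=0$) and justifies the corollary via the chain $i(S)=\delta(S^{*})=\delta(T)=i(T^{*})$ using $\kappa(T^{*})=s(T)$ and $\kappa(T)=s(T^{*})$. The only quibble is that your identity $\kappa(S^{n})=s((S^{n})^{*})$ is the second of the two dual-modulus identities rather than the first, but this is immaterial since both hold.
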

%
% ---------- Corollary ----------------------------------------------
%
\begin{corollary} Let $T_{1},T_{2}\in L(l^\infty)$ be commuting adjoint operators, which are also $J$-class. Then $T:=T_{1}T_{2}$ is also $J$-class.
\end{corollary}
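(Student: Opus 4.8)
The plan is to verify the criterion in part (vi) of Corollary \ref{main equivalence} for the product $T := T_1 T_2$. Write $T_i = S_i^*$ with $S_i \in L(l^1)$; then $T = (S_1 S_2)^* = (S_2 S_1)^*$ as well, since $T_1$ and $T_2$ commute, so $T$ is again an adjoint operator and the corollary applies. Since both $T_1$ and $T_2$ are $J$-class adjoints, (vi) gives $i(S_1) > 1$, $i(S_2) > 1$, and $0 \in \sigma_p(T_1)$, $0 \in \sigma_p(T_2)$. It remains to show that $i(S_1 S_2) > 1$ and $0 \in \sigma_p(T_1 T_2)$, and then conclude via $(vi) \Rightarrow (ii)$.

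For the injectivity modulus, I would use the submultiplicativity from Remark \ref{value-remark}: $\kappa((S_1 S_2)^n) = \kappa(S_1^n S_2^n) \geq \kappa(S_1^n)\kappa(S_2^n)$ (here commutativity of $S_1, S_2$ — inherited from that of $T_1, T_2$ by taking adjoints, or rather pre-adjoints — is what lets me separate the powers). Taking $n$-th roots and passing to the limit yields $i(S_1 S_2) \geq i(S_1)\, i(S_2) > 1$. (Alternatively one can argue with the surjectivity modulus $s$ and the identity $i(S) = \delta(S^*)$, but the $\kappa$-route is cleanest.) For the kernel condition, pick $0 \neq x \in l^\infty$ with $T_1 x = 0$. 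If $T_2 x = 0$ we are done since then $T x = 0$. Otherwise $T_2 x \neq 0$; since $T_1 T_2 = T_2 T_1$ we have $T_1(T_2 x) = T_2(T_1 x) = 0$, and moreover $T(T_2 x) = T_2 T_1 T_2 x = 0$ — so in either case the vector $x$ itself already satisfies $T_1 T_2 x = T_2 T_1 x = T_2 \cdot 0 = 0$, giving $0 \in \sigma_p(T)$ directly. Hence both halves of (vi) hold for $T$, and $T$ is $J^{mix}$-class, in particular $J$-class.

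The only genuinely delicate point is the bookkeeping of which objects commute: the hypothesis is that $T_1, T_2$ commute on $l^\infty$, and I need $S_1, S_2$ to commute on $l^1$ in order to write $(S_1 S_2)^n = S_1^n S_2^n$ and apply Remark \ref{value-remark} cleanly. This follows because taking adjoints is a (contravariant, but for commuting pairs this is immaterial) algebra homomorphism: $S_1 S_2$ and $S_2 S_1$ have the same adjoint $T_2 T_1 = T_1 T_2$, and on $l^1$ — which is the canonical predual of $l^\infty$ and embeds isometrically into $l^1{}^{**} = (l^\infty)^*$ — a pair of operators whose adjoints agree must themselves agree. So $S_1 S_2 = S_2 S_1$, and everything goes through. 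With that settled the argument is essentially the two short estimates above.
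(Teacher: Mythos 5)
Your proposal is correct and follows essentially the same route as the paper: both verify the criterion of Corollary \ref{main equivalence} for $T=(S_2S_1)^*$ by using the submultiplicativity $\kappa((S_2S_1)^n)\geq\kappa(S_2^n)\kappa(S_1^n)$ from Remark \ref{value-remark} to get $i(S_2S_1)>1$, and a kernel containment to get $0\in\sigma_p(T)$. The only (immaterial) differences are that you justify the commutativity of the pre-adjoints explicitly and use $N(T_1T_2)\supset N(T_1)$ via commutation, whereas the paper simply uses $N(T_1T_2)\supset N(T_2)$.
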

\begin{proof} Let $S_1$ and $S_2$ be the corresponding operators on $l^1$, such that $(S_i)^*=T_i$ for $i\in\{1,2\}$. We have $T=S^{*}_{1}S^{*}_{2}=(S_{2}S_{1})^{*}$ and by the above theorem, we get that $i(S_i)>1$ holds for $i=1,2$. Furthermore we get from Remark \ref{value-remark}
\[
\kappa((S_{2}S_{1})^n)=\kappa(S_{2}^{n}S_{1}^{n})\geq\kappa(S_2^n)\kappa(S_1^n)
\]
for all $n\in\mathbb{N}$. Hence $i(S_{2}S_{1})\geq i(S_2)\cdot i(S_1)>1$ and $N(T_{1}T_{2})\supset N(T_2)\neq\{0\}$. This shows that condition (v) of Theorem \ref{main equivalence} is satisfied for $T$ and is therefore $J$-class.
\end{proof}
%
% ---------- Proposition  ----------------------------------------------
%
\begin{proposition}\label{open} Let $X$ be a Banach space. Then the set
\[M:=\{T\in L(X): \delta(T)>1 \text{ and } N(T)\neq\{0\}\}\]
is open with respect to the norm-topology.
\end{proposition}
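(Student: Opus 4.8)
The plan is to show that both defining conditions are stable under small norm perturbations, and that the intersection of two open sets is open.

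First I would handle the condition $\delta(T) > 1$. Recall from the discussion preceding Proposition~\ref{pre-main-equivalence} that $\delta(T) = i(T^{*}) = \operatorname{dist}(0, \sigma_{a}(T^{*}))$. A cleaner route, avoiding the adjoint, is to use $\delta(T) = \lim_n s(T^n)^{1/n} = \sup_n s(T^n)^{1/n}$ (the supremum holding because $s$ is supermultiplicative by Remark~\ref{value-remark}, so $\log s(T^n)$ is superadditive and Fekete's lemma applies). Thus $\delta(T) > 1$ forces the existence of some fixed $n_0$ with $s(T^{n_0})^{1/n_0} > 1 + 2\varepsilon$ for some $\varepsilon > 0$. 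Now $s$ is $1$-Lipschitz with respect to the operator norm (if $\|T - R\| < \eta$ then $R(B_X) \supseteq T(B_X) - \eta B_X \supseteq (s(T) - \eta) B_X$, so $|s(T) - s(R)| \le \|T - R\|$), and the map $R \mapsto R^{n_0}$ is continuous in norm; hence for $R$ close enough to $T$ we get $s(R^{n_0}) > (1+\varepsilon)^{n_0}$, so $\delta(R) \ge s(R^{n_0})^{1/n_0} > 1 + \varepsilon > 1$. This gives openness of $\{\delta(T) > 1\}$.

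Next I would handle the condition $N(T) \neq \{0\}$. Here is the key observation: if $\delta(T) > 1$, then in particular $s(T) \ge \delta(T) > 1$ (again by supermultiplicativity, $s(T)^{n} \ge s(T^n)$ is false in general — rather one uses $s(T^n) \ge s(T)^n$, giving $\delta(T) \le s(T)$ directly), so $T$ is surjective. An operator that is surjective but not injective has, loosely speaking, a genuine kernel that cannot disappear under small perturbation: more precisely, if $T$ is onto and $N(T) \neq \{0\}$, pick $0 \neq x_0 \in N(T)$ with $\|x_0\| = 1$; for $R$ with $\|R - T\| < s(T) - 1 =: \eta_0$, the operator $R$ is still onto (by the Lipschitz bound above $s(R) \ge s(T) - \|R-T\| > 1 > 0$), and one can produce a nonzero element of $N(R)$ by a successive-approximation / open-mapping argument: solve $R y_1 = (T - R) x_0 = -(R - T) x_0$ with $\|y_1\| \le \|R-T\|/s(R)$, then $R(x_0 + y_1) = T x_0 + (R-T)x_0 + R y_1 = 0$; one must only check $x_0 + y_1 \neq 0$, which holds once $\|y_1\| < 1 = \|x_0\|$, i.e. once $\|R - T\| < s(R)$, which we have arranged. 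So $N(R) \neq \{0\}$.

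Finally, combining the two neighborhoods: given $T \in M$, choose $\varepsilon, n_0$ as above and let $\eta = \min\{\eta_1, s(T) - 1\}$ where $\eta_1 > 0$ is small enough that $\|R - T\| < \eta_1$ implies $s(R^{n_0}) > (1+\varepsilon)^{n_0}$. Then every $R$ with $\|R - T\| < \eta$ lies in $M$, proving $M$ is open. The main obstacle I anticipate is the bookkeeping around which direction the supermultiplicativity of $s$ runs and getting a clean statement that $\delta(T) > 1$ implies $s(T) > 1$ (hence surjectivity) — once that is pinned down, the perturbation estimate for the kernel via the open mapping theorem is the only genuinely substantive step, and it is standard.
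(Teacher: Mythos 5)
Your overall strategy (show each of the two defining conditions is stable under small norm perturbation) is the same as the paper's; the paper handles the modulus via $s(S^m)=\kappa((S^*)^m)$ and the trivially $1$-Lipschitz $\kappa$, and simply cites the openness of the set of surjective operators with non-trivial kernel, whereas you work with $s$ directly and try to prove the kernel-perturbation step yourself. However, your argument contains a genuine error at the hinge between the two parts. You claim that $\delta(T)>1$ forces $s(T)\geq\delta(T)>1$, and your parenthetical justification has the inequality backwards: supermultiplicativity gives $s(T^n)\geq s(T)^n$, hence $s(T^n)^{1/n}\geq s(T)$ and therefore $\delta(T)\geq s(T)$ --- not $\delta(T)\leq s(T)$. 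The claim itself is false: take the weighted backward shift on $\ell^2$ (or $\ell^\infty$) with weights $w_1=\tfrac12$ and $w_n=2$ for $n\geq 2$; then $s(B_w)=\tfrac12<1$ while $s(B_w^n)\geq 2^{n-2}$, so $\delta(B_w)=2>1$. Consequently your radius $\eta_0:=s(T)-1$ may be negative, and the kernel-perturbation argument as written collapses. Even when $s(T)>1$ does happen to hold, the condition $\left\|R-T\right\|<s(T)-1$ does not imply $\left\|R-T\right\|<s(R)$ (take $s(T)=10$ and $\left\|R-T\right\|=8$: you only get $s(R)\geq 2$), so the step ``which we have arranged'' is not arranged.

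The repair is short: $\delta(T)>1$ gives $s(T^{n_0})>0$ for some $n_0$, hence $T^{n_0}$ and therefore $T$ is surjective, so $s(T)>0$ by the open mapping theorem; now take $\left\|R-T\right\|<s(T)/2$, which forces $s(R)\geq s(T)-\left\|R-T\right\|>\left\|R-T\right\|$, and your successive-approximation construction of a non-zero element of $N(R)$ goes through. Two further points deserve tightening: your one-line justification of the $1$-Lipschitz property of $s$ via the set inclusion $R(B_X)\supseteq T(B_X)-\eta B_X$ is not a proof (the inclusion does not follow from $\left\|R-T\right\|<\eta$; one needs the same iteration/completeness argument, or a citation to M\"uller), and in the final solve you should work with some $r<s(R)$ rather than $s(R)$ itself, since the supremum in the definition of $s$ need not be attained. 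With these corrections your proof is valid and is, if anything, more self-contained than the paper's, which delegates the kernel step to a reference.
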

\begin{proof} Consider any $T\in M$. Then there exist some $m\in\mathbb{N}$ and a positive constant such that $s(T^m)>c>1$. In particular $T$ is surjective and $N(T)\neq\{0\}$. Define 
\[g: L(X)\rightarrow L(X^{*})\  \text{ by }\  g(S):=(S^{*})^{m}.\]
Then $g$ is continuous since the mappings $S\rightarrow S^{*}$ and $S\rightarrow S^m$ are continuous with respect to the operator norm-topologies. Now the set of all surjective operators with non-trivial kernel is open with respect to the norm-topology, see \cite{Abramovich}, p. 73. For $\varepsilon:=\frac{c-1}{2}$ we can therefore choose $\nu>0$ small enough such that
for all $S\in L(X)$ satisfying $\left\|S-T\right\|<\nu$ we have that $S$ is surjective, $N(S)\neq\{0\}$ and \[\left\|(T^*)^{m}-(S^{*})^{m}\right\|<\varepsilon.\] Hence it follows
\[\left\|(T^*)^{m}x^{*}\right\|<\left\|(S^*)^{m}x^{*}\right\|+\varepsilon \text{ for all } x^{*}\in X^{*} \text{ with } \left\|x^{*}\right\|=1\]
and therefore
\[c<s(T^{m})=\kappa((T^{*})^{m})\leq \kappa((S^{*})^{m})+\varepsilon=s(S^{m})+\varepsilon.\]
This implies in particular $1<\frac{c+1}{2}=c-\varepsilon<s(S^{m})$ and therefore by Remark \ref{value-remark}
\[\left(\frac{c+1}{2}\right)^{n}<(s(S^{m}))^{n}\leq s(S^{mn}).\]
Hence we get 
\[1<\left(\frac{c+1}{2}\right)^{\frac{1}{m}}<(s(S^{mn}))^{\frac{1}{mn}}\rightarrow \delta(S) \text{ for } n\rightarrow\infty.\]
It follows that the open ball with radius $\nu$ and center $T$ is included in $M$.
\end{proof}
%
% ---------- Corollary ----------------------------------------------
%
\begin{corollary} The set 
\begin{align*}JA(l^1)&:=\{S\in L(l^1): S^{*} \text{ is } \text{J-class}\}\\
&=\{S\in L(l^1): \sigma(S)\cap\partial\mathbb{D}\neq\emptyset,\overline{\mathbb{D}}\cap\sigma_{p}(S^{**})=\emptyset\}
\end{align*}
is open with respect to the norm-topology in $L(l^1)$.
\end{corollary}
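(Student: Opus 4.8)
The plan is to identify the set $JA(l^1)$ with the preimage of the set $M$ from Proposition~\ref{open} under the adjoint map, and then invoke the openness of $M$ together with continuity of $S \mapsto S^*$. More precisely, first I would justify the displayed equality describing $JA(l^1)$: by Corollary~\ref{main equivalence} applied to $T = S^*$ (noting $T^* = S^{**}$), the adjoint $S^*$ is $J$-class if and only if $\sigma(S^*) \cap \partial\mathbb{D} \neq \emptyset$ and $\sigma_p(S^{**}) \cap \overline{\mathbb{D}} = \emptyset$; since $\sigma(S^*) = \sigma(S)$ this is exactly the second description. So the equality is essentially a restatement of the corollary and needs no new work.

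For openness, the key observation is the chain of equivalences in Corollary~\ref{main equivalence}: $S^*$ is $J$-class iff condition~(v) holds for $T = S^*$, i.e. $i(S) > 1$ and $\overline{\mathbb{D}} \subseteq \sigma_p(S^*)$, and in fact (vi) shows this is equivalent to $i(S) > 1$ together with $0 \in \sigma_p(S^*)$, i.e. $N(S^*) \neq \{0\}$. Now recall from the discussion preceding Corollary~\ref{main equivalence} that $i(S) = \delta(S^*)$. Hence
\[
JA(l^1) = \{S \in L(l^1) : \delta(S^*) > 1 \text{ and } N(S^*) \neq \{0\}\} = g^{-1}(M),
\]
where $g : L(l^1) \to L(l^\infty)$ is the map $g(S) = S^*$ and $M = \{R \in L(l^\infty) : \delta(R) > 1,\ N(R) \neq \{0\}\}$ is the set shown to be open in Proposition~\ref{open} (with $X = l^\infty$). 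Since $S \mapsto S^*$ is an isometry, hence continuous, with respect to the operator-norm topologies, $g^{-1}(M)$ is open, which is the claim.

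The only point requiring a little care — and the main (mild) obstacle — is making sure the translation between $0 \in \sigma_p(S^*)$ (as it appears in Corollary~\ref{main equivalence}(vi)) and $N(S^*) \neq \{0\}$ (as it appears in the definition of $M$) is clean: these are literally the same condition, $0 \in \sigma_p(R)$ meaning $R$ is not injective, i.e. $N(R) \neq \{0\}$. One should also confirm that Corollary~\ref{main equivalence} indeed covers the case $T = S^*$ for every $S \in L(l^1)$, which it does since such $T$ lie in $L_A^W(l^\infty)$ trivially (take $W = 0$). With these routine identifications in place, the proof is simply: $JA(l^1) = g^{-1}(M)$ with $M$ open by Proposition~\ref{open} and $g$ continuous, so $JA(l^1)$ is open.

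\begin{proof} By Corollary~\ref{main equivalence} applied to $T = S^*$ (note $T^* = S^{**}$ and $\sigma(S^*) = \sigma(S)$), the operator $S^*$ is $J$-class if and only if $\sigma(S) \cap \partial\mathbb{D} \neq \emptyset$ and $\overline{\mathbb{D}} \cap \sigma_p(S^{**}) = \emptyset$, which establishes the displayed equality. By the equivalence of (ii) and (vi) in that corollary, $S^*$ is $J$-class if and only if $i(S) > 1$ and $0 \in \sigma_p(S^*)$, i.e. $N(S^*) \neq \{0\}$. Recalling $i(S) = \delta(S^*)$ from the remarks preceding Corollary~\ref{main equivalence}, we obtain
\[
JA(l^1) = \{S \in L(l^1) : \delta(S^*) > 1 \text{ and } N(S^*) \neq \{0\}\}.
\]
Let $M = \{R \in L(l^\infty) : \delta(R) > 1,\ N(R) \neq \{0\}\}$, which is open in $L(l^\infty)$ by Proposition~\ref{open}, and let $g : L(l^1) \to L(l^\infty)$, $g(S) = S^*$. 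Then $g$ is an isometry, hence continuous, and $JA(l^1) = g^{-1}(M)$ is therefore open in $L(l^1)$.
\end{proof}
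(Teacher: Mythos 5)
Your proof is correct and follows essentially the same route as the paper: both identify $JA(l^1)$ as the preimage of the open set $M$ from Proposition~\ref{open} under the isometry $S\mapsto S^*$, using the equivalences of Corollary~\ref{main equivalence} and the identity $i(S)=\delta(S^*)$. No gaps.
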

\begin{proof}Consider the linear isometry $\phi:L(l^1)\rightarrow L(l^{\infty})$ defined by $\phi(S)=S^{*}$. Then we have
\begin{align*}\phi^{-1}(M)&=\{S\in L(l^1): \delta(S^{*})>1 \text{ and } N(S^*)\neq\emptyset\}\\
&=\{S\in L(l^1): i(S)>1 \text{ and } N(S^*)\neq\emptyset\}\\
&=\{S\in L(l^1): S^{*} \text{ is } \text{J-class}\}\\
&=\{S\in L(l^1): \sigma(S^{*})\cap\partial\mathbb{D}\neq\emptyset, \overline{\mathbb{D}}\cap\sigma_{p}(S^{**})=\emptyset)\}\\
&=\{S\in L(l^1): \sigma(S)\cap\partial\mathbb{D}\neq\emptyset, \overline{\mathbb{D}}\cap\sigma_{p}(S^{**})=\emptyset\},
\end{align*}
where $M$ is the set as in Proposition \ref{open} applied to $l^\infty$ and since $\phi$ is continuous the desired statement follows by Proposition \ref{open} and the equivalences of Corollary \ref{main equivalence}.
\end{proof}
\begin{remark} The latter corollary shows the strong relation between adjoint $J$-class operators on $l^{\infty}$ and their spectral behavior. It is also interesting in its own regarding operators on $l^1$ and their spectrum. 
\end{remark}
As promised before we will now characterize a large class of operators on $l^\infty$ using our new tools established above.
%
% ---------- Theorem ----------------------------------------------
%
\begin{theorem}\label{$f$-theorem} (i) Consider $T\in L_{A}^{W}(l^{\infty})$ with the corresponding operators $S\in L(l^1)$ and $W\in L(l^\infty)$. Let $f$ be a holomorphic map on a neighborhood of $K_{R}(0)$, where $R>\max\{r(S^{*}+W),r(S^{*})\}$ and $K_{R}(0)$ is the open disk at zero with radius $R$. Then $f(T)$ is $J$-class if and only if
\[f(\sigma_{a}(T^{*}))\cap\overline{\mathbb{D}}=\emptyset \text{ and } \overline{\mathbb{D}}\subseteq f(\sigma(T^{*})\backslash\sigma_{a}(T^{*})).\]
(ii) Let $B_{w}$ be the unilateral backward shift with a positive and bounded weight sequence $w:=(w_n)_{n}$ on $l^{\infty}$ and consider $f(B_{w})$, where $f$ is a holomorphic map defined on a neighborhood of $K_{r_1}$. Then $f(B_{w})$ is $J$-class if and only if \[\overline{\mathbb{D}}\cap f(\overline{K_{r_1}}\backslash K_{r_2})=\emptyset \text{ and } 
\overline{\mathbb{D}}\subseteq f(K_{r_2}),\] where
\begin{align*}
r_1&=\lim\limits_{n\rightarrow\infty}\ \sup\limits_{k\in\mathbb{N}}\ (w_k\cdot\ldots\cdot w_{k+n-1})^{\frac{1}{n}},\\
r_2&=\lim\limits_{n\rightarrow\infty}\ \inf\limits_{k\in\mathbb{N}}\ (w_k\cdot\ldots\cdot w_{k+n-1})^{\frac{1}{n}}
\end{align*}
and $K_{r_i}$ denotes the open disc centered at $0$ with radius $r_i$ for $i=1,2,$.
\end{theorem}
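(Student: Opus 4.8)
The plan for (i) is to apply Proposition~\ref{pre-main-equivalence} to the operator $f(T)$ itself, so the first step is to verify that $f(T)$ again belongs to $L_{A}^{W}(l^{\infty})$. Since $r(T)<R$ and $r(S^{*})=r(S)<R$, the Taylor expansion $\sum_{n}a_{n}z^{n}$ of $f$ at $0$ has radius of convergence at least $R$, so $f(T)=\sum_{n}a_{n}T^{n}$ and $f(S^{*})=\sum_{n}a_{n}(S^{*})^{n}$ converge in operator norm, and $f(S^{*})=(f(S))^{*}$ because $(S^{*})^{n}=(S^{n})^{*}$. Applying the quotient homomorphism $q\colon L(l^{\infty})\to L(l^{\infty})/W(l^{\infty})$ onto the weak Calkin algebra, and using that the continuous unital homomorphism $q$ commutes with the holomorphic functional calculus together with $q(T)=q(S^{*}+W)=q(S^{*})$, one gets $q(f(T))=f(q(S^{*}))=q(f(S^{*}))$, so $f(T)-(f(S))^{*}\in W(l^{\infty})$ and hence $f(T)\in L_{A}^{W}(l^{\infty})$. (We may assume $f$ is non-constant: for constant $f$ the operator $f(T)$ is a scalar multiple of the identity, which is never $J$-class, and both displayed conditions fail as well.)

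I would then translate the conditions (v)--(vi) of Proposition~\ref{pre-main-equivalence} for $f(T)$ using three spectral mapping facts: $\sigma(f(T))=f(\sigma(T))$, the spectral mapping theorem for the approximate point spectrum $\sigma_{a}(f(T))=f(\sigma_{a}(T))$ (see \cite{V.M.}), and the duality $\sigma_{a}(A^{*})=\sigma_{su}(A)$, which identifies $\sigma(A^{*})\setminus\sigma_{a}(A^{*})$ with the set of $\lambda$ for which $A-\lambda$ is surjective but not injective; one also uses $(f(T))^{*}=f(T^{*})$ and $i(R)=\text{dist}(0,\sigma_{a}(R))$. First, $i((f(T))^{*})>1$ is equivalent to $\text{dist}(0,f(\sigma_{a}(T^{*})))>1$, i.e. to $f(\sigma_{a}(T^{*}))\cap\overline{\mathbb{D}}=\emptyset$, which is the first displayed condition. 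Granting this, $\delta(f(T))=i((f(T))^{*})>1$ makes $f(T)-\lambda$ surjective for every $|\lambda|\le 1$, so $\overline{\mathbb{D}}\subseteq\sigma_{p}(f(T))$ becomes equivalent to $\overline{\mathbb{D}}\subseteq\{\lambda:f(T)-\lambda\text{ surjective, not injective}\}=\sigma((f(T))^{*})\setminus\sigma_{a}((f(T))^{*})=f(\sigma(T^{*}))\setminus f(\sigma_{a}(T^{*}))$; and since $f(\sigma_{a}(T^{*}))$ is disjoint from $\overline{\mathbb{D}}$, an elementary set computation shows this is in turn equivalent to $\overline{\mathbb{D}}\subseteq f(\sigma(T^{*})\setminus\sigma_{a}(T^{*}))$, the second displayed condition. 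Since by Proposition~\ref{pre-main-equivalence} the operator $f(T)$ is $J$-class iff $i((f(T))^{*})>1$ and $\overline{\mathbb{D}}\subseteq\sigma_{p}(f(T))$, combining the two equivalences proves (i).

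For (ii), I would first observe that $B_{w}=F^{*}$, where $F\colon l^{1}\to l^{1}$ is the weighted forward shift $Fe_{n}=w_{n}e_{n+1}$; thus $B_{w}$ is an adjoint operator and part (i) applies with $S=F$ and $W=0$. It then remains to identify $\sigma_{a}(B_{w}^{*})=\sigma_{su}(B_{w})$ and $\sigma(B_{w}^{*})\setminus\sigma_{a}(B_{w}^{*})$. A Fekete argument on the submultiplicative sequence $n\mapsto\sup_{k}w_{k}\cdots w_{k+n-1}$ gives $r(B_{w})=r_{1}$, and testing the candidate eigenvector $x_{n}=\lambda^{n-1}/(w_{1}\cdots w_{n-1})$ shows $K_{r_{2}}\subseteq\sigma_{p}(B_{w})$. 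The crux is the surjectivity spectrum: one must show that, for $|\lambda|\le r_{1}$, the operator $B_{w}-\lambda$ is onto $l^{\infty}$ precisely when $|\lambda|<r_{2}$, i.e. $\sigma_{su}(B_{w})=\overline{K_{r_{1}}}\setminus K_{r_{2}}$. For $|\lambda|<r_{2}$ a bounded solution of $w_{n}x_{n+1}-\lambda x_{n}=y_{n}$ is obtained by forward iteration from $x_{1}=0$, the geometric decay of the coefficients coming from the uniform lower bounds $w_{k}\cdots w_{k+m-1}\ge(r_{2}-\varepsilon)^{m}$; for $r_{2}\le|\lambda|\le r_{1}$ one chooses $y\in l^{\infty}$ concentrated on blocks of weights on which $w_{k}\cdots w_{k+m-1}$ is essentially as small as its infimum over $k$, so that no choice of $x_{1}$ keeps the solution in $l^{\infty}$. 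These facts together give $\sigma(B_{w})=\overline{K_{r_{1}}}$, hence $\sigma_{a}(B_{w}^{*})=\overline{K_{r_{1}}}\setminus K_{r_{2}}$ and $\sigma(B_{w}^{*})\setminus\sigma_{a}(B_{w}^{*})=K_{r_{2}}$, and substituting into the criterion of part (i) yields exactly $\overline{\mathbb{D}}\cap f(\overline{K_{r_{1}}}\setminus K_{r_{2}})=\emptyset$ and $\overline{\mathbb{D}}\subseteq f(K_{r_{2}})$.

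The routine parts are the norm-convergence and functional-calculus bookkeeping in (i) and the Fekete formula $r(B_{w})=r_{1}$. The steps I expect to be the main obstacles are, in (i), keeping the three spectral mapping statements (for $\sigma$, for $\sigma_{a}$, and for $\sigma\setminus\sigma_{a}$ via duality) coordinated so that the hypotheses land precisely on $\sigma_{a}(T^{*})$ and $\sigma(T^{*})\setminus\sigma_{a}(T^{*})$, together with the care needed about closed range versus injectivity and surjectivity; and, in (ii), the inclusion $\overline{K_{r_{1}}}\setminus K_{r_{2}}\subseteq\sigma_{su}(B_{w})$, that is, constructing for each $\lambda$ with $r_{2}\le|\lambda|\le r_{1}$ a right-hand side $y\in l^{\infty}$ for which the recursion has no bounded solution.
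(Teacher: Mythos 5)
Your argument for (i) is correct and is essentially the paper's: the paper likewise expands $f$ in a power series to write $f(T)=(f(S))^{*}+V$ with $V$ weakly compact (your weak-Calkin-algebra formulation of that step is an equivalent repackaging), and then runs Proposition~\ref{pre-main-equivalence} together with the spectral mapping theorems for $\sigma$ and $\sigma_{a}$ and the duality between the approximate point spectrum of the adjoint and the surjectivity spectrum, ending with the same elementary set-theoretic reduction from $f(\sigma(T^{*}))\setminus f(\sigma_{a}(T^{*}))$ to $f(\sigma(T^{*})\setminus\sigma_{a}(T^{*}))$. The only real divergence is in (ii): where you propose to compute $\sigma_{su}(B_{w})$ by hand on $l^{\infty}$, solving the recursion $w_{n}x_{n+1}-\lambda x_{n}=y_{n}$ and exhibiting right-hand sides with no bounded solution when $r_{2}\le|\lambda|\le r_{1}$, the paper instead uses $\sigma_{a}(B_{w}^{*})=\sigma_{a}(S_{w}^{**})=\sigma_{a}(S_{w})$ and quotes the classical description of the spectra of a weighted forward shift on $l^{1}$ from Laursen--Neumann, namely $\sigma_{a}(S_{w})=\{\lambda:\ i(S_{w})\le|\lambda|\le r(S_{w})\}$ with $i(S_{w})=r_{2}$, $r(S_{w})=r_{1}$, and $\sigma(S_{w})=\overline{K_{r_{1}}}$. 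The citation route buys exactly the step you flag as the main obstacle --- the non-surjectivity of $B_{w}-\lambda$ for $r_{2}\le|\lambda|\le r_{1}$, which in your version is only sketched (``blocks of weights'') and would still need a genuine construction --- whereas your route is self-contained and makes the $l^{\infty}$ picture explicit. Either way the resulting spectral data are identical and the substitution into the criterion of (i) is the same.
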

\begin{proof} $(i)$ Let $\sum_{n=0}^{\infty}a_n z^{n}$ be the power series expansion of $f(z)$. For each positive integer we can write $(S^{*}+W)^{n}=(S^{*})^{n}+W_n$, where $W_n$ is weakly compact, since the weakly compact operators form a closed two-sided ideal (see \cite{Abramovich}, p. 89). Since $R>r(S^{*})$ the corresponding operator $f(S^{*})=(f(S))^{*}$ exists and so we get
\begin{align*} f(T)-f(S^{*})&=\sum_{n=0}^{\infty}a_n (S^{*}+W)^{n}-\sum_{n=0}^{\infty}a_n (S^{*})^{n}\\&=
\sum_{n=0}^{\infty}a_n ((S^{*})^{n}+W_{n})-\sum_{n=0}^{\infty}a_n (S^{*})^{n}
=\sum_{n=0}^{\infty}a_n W_{n}=:V
\end{align*}
This shows in particular that the series $V$ exists and is weakly compact. Therefore we get that $f(T)=(f(S))^{*}+V\in L_{A}^{W}(l^{\infty})$ holds. Assume that $f(T)$ is $J$-class. Then by Proposition \ref{pre-main-equivalence}, the Spectral theorem and the Spectral theorem for the approximate spectrum, we get that
\[\emptyset=\overline{\mathbb{D}}\cap\sigma_a(f(T^{*}))=\overline{\mathbb{D}}\cap f(\sigma_a(T^{*}))\tag{$1$}\]
and 
\[\overline{\mathbb{D}}\subseteq\sigma_p(f(T))\subseteq\sigma (f(T))=\sigma((f(T))^{*})=\sigma(f(T^{*}))=f(\sigma(T^{*}))\]
holds. Hence with $(1)$ we get that $\overline{\mathbb{D}}\subseteq f(\sigma(T^{*})\backslash\sigma_{a}(T^{*}))$ holds.
This shows the first direction. For the other direction read the steps in $(1)$ backward which in particular shows that $i(f(T)^{*})>1$. Moreover we have that
\[\overline{\mathbb{D}}\subseteq f(\sigma(T^{*})\backslash\sigma_{a}(T^{*}))=f(\sigma(T)\backslash\sigma_{s}(T))\subseteq f(\sigma_{p}(T))=\sigma_{p}(f(T)),\] 
where we used the Spectral theorem for the point spectrum.
Again by Theorem \ref{pre-main-equivalence} we get that $f(T)$ is $J$-class.\\
$(ii)$ Let $T:=B_{w}$. Consider the forward shift $S_{w}$ on $l^{1}$ with a positive and bounded weight sequence $w=(w_1,w_2,w_3,\ldots,)$, i.e. \[S_{w}(x_1,x_2,x_3,\ldots)=(0,w_{1}x_{1},w_{2}x_2,w_{3}x_3,\ldots).\]
Then $T=S_{w}^{*}$ and the approximate spectrum of $T^{*}$ is exactly (see, \cite{K.L.}, p. 86)
\begin{align*}\sigma_{a}(T^{*})&=\sigma_{a}(S_{w}^{**})=\sigma_a(S_w)=\{\lambda\in\mathbb{C}|\  i(S_w)\leq|\lambda|\leq r(S_w)\}\\
&=\overline{K_{r_1}}\backslash K_{r_2},  
\end{align*}
where 
\[i(S_w)=r_2 \text{ and } r(S_w)=r_1.\]
Further the whole spectrum of $S_w$ is the closed disk with center $0$ and radius $r_1$, see \cite{K.L.}, p. 86. In other words $\sigma(T^{*})=\sigma(S_w)=\overline{K_{r_1}}.$
Hence we get by $(i)$ that $f(B_w)$ is $J$-class if and only if 
\[f(\overline{K_{r_1}}\backslash K_{r_2})\cap\overline{\mathbb{D}}=f(\sigma_{a}(T^{*}))\cap\overline{\mathbb{D}}=\emptyset\text{ and } \overline{\mathbb{D}}\subseteq f(\sigma(T)\backslash\sigma_{a}(T^{*}))=f(K_{r_2}).\]
\end{proof}
The following theorem  gives a characterization of operators of the form $T=f(B)$, whenever they are chaotic. Here $B$ is the unweighted backward shift.
%
% ---------- Theorem ----------------------------------------------
%
\begin{theorem}(\cite{Laube}, \cite{Erdmann} p. 122)
Let $X$ be one of the spaces $l^{p}$, $1\leq p<\infty$ or $c_0$. Further let $f$ be a holomorphic function on a neighborhood of $\overline{\mathbb{D}}$. Then the following assertions are equivalent:\\\\
$(i)$ $f(B)$ is chaotic.\\\\
$(ii)$ $f(\mathbb{D})\cap\partial\mathbb{D}\neq\emptyset$.\\\\
$(iii)$ $f(B)$ has a non-trivial periodic point.
\end{theorem}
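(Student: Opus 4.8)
The plan is to establish the equivalence of (i), (ii) and (iii) via the Godefroy--Shapiro eigenvalue criterion (see \cite{Erdmann}), after recording two preliminary facts. Writing $f(z)=\sum_{n\ge0}a_nz^n$, observe first that, $f$ being holomorphic on a neighbourhood of the compact set $\overline{\mathbb D}$, its Taylor series has radius of convergence strictly larger than $1=r(B)$, so that $f(B)=\sum_{n}a_nB^n$ converges in operator norm and agrees with the functional calculus; observe secondly that for each $\lambda\in\mathbb D$ the vector $e_\lambda:=(1,\lambda,\lambda^2,\dots)$ lies in $X$ and satisfies $Be_\lambda=\lambda e_\lambda$, hence $f(B)e_\lambda=f(\lambda)e_\lambda$. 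I would also assume $f$ to be non-constant, the constant case being degenerate. The first genuine ingredient is a density lemma: if $A\subseteq\mathbb D$ has an accumulation point inside $\mathbb D$, then $\overline{\text{span}}\{e_\lambda:\lambda\in A\}=X$. This holds because a functional $\varphi=(c_k)_k\in X^{*}$ (recall $X^{*}$ is $l^q$ or $l^1$) annihilating every such $e_\lambda$ produces the power series $\lambda\mapsto\sum_kc_k\lambda^k$, convergent on $\mathbb D$, which vanishes on $A$ and hence identically, so $\varphi=0$.

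The second ingredient controls kernels of functions of $B$: for $g$ holomorphic on a neighbourhood of $\overline{\mathbb D}$ with $g\not\equiv0$ one has $\ker g(B)\neq\{0\}$ precisely when $g$ vanishes somewhere in the \emph{open} disc. Indeed $g(\zeta)=0$ with $\zeta\in\mathbb D$ gives $e_\zeta\in\ker g(B)$; conversely, if the finitely many zeros of $g$ in $\overline{\mathbb D}$ all lie on $\partial\mathbb D$, one factors $g=p\,h$ with $p$ the polynomial carrying exactly those boundary zeros (with multiplicity) and $h$ holomorphic and non-vanishing on a neighbourhood of $\overline{\mathbb D}$; then $h(B)$ is invertible, each factor $B-\zeta I$ with $|\zeta|=1$ is injective on $X$ (here the choice $X=l^p$ with $p<\infty$, or $c_0$, is essential: $Bx=\zeta x$ forces $x_n=\zeta^{n-1}x_1$, which lies in $X$ only when $x_1=0$), and so $g(B)$ is injective. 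Taking $g=f^{\,n}-1$, this identifies condition (iii) --- that $f(B)$ have a non-trivial periodic point, i.e. that $\ker(f(B)^n-I)\neq\{0\}$ for some $n\ge1$ --- with the statement that $f(\mathbb D)$ contains a root of unity.

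The equivalences then fall out quickly. For (i)$\Rightarrow$(iii): a chaotic operator on an infinite-dimensional space has a dense, hence non-zero, set of periodic points. For (iii)$\Rightarrow$(ii): a root of unity in $f(\mathbb D)$ lies on $\partial\mathbb D$. For (ii)$\Rightarrow$(iii): $f(\mathbb D)$ is open by the open mapping theorem and, being a neighbourhood of a point of $\partial\mathbb D$, contains an arc of $\partial\mathbb D$, hence a root of unity since these are dense in $\partial\mathbb D$. For (ii)$\Rightarrow$(i): $f(\mathbb D)$, being open and meeting $\partial\mathbb D$, contains points of modulus $<1$ and points of modulus $>1$, so the open sets $\{\lambda\in\mathbb D:|f(\lambda)|<1\}$ and $\{\lambda\in\mathbb D:|f(\lambda)|>1\}$ are non-empty and thus have accumulation points in $\mathbb D$; and if $\lambda_0\in\mathbb D$ has $|f(\lambda_0)|=1$ then the open mapping theorem maps every neighbourhood of $\lambda_0$ onto a neighbourhood of $f(\lambda_0)\in\partial\mathbb D$, which contains roots of unity, so $\{\lambda\in\mathbb D:f(\lambda)^m=1\text{ for some }m\ge1\}$ accumulates at $\lambda_0$. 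By the density lemma the spans of the three associated families $\{e_\lambda\}$ are then dense in $X$, and the Godefroy--Shapiro criterion yields that $f(B)$ is hypercyclic with a dense set of periodic points, i.e. chaotic.

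I expect the periodic-point equivalence (the second ingredient) to be the main obstacle: one has to control $\ker(f(B)^n-I)$ in the presence of zeros of $f^{\,n}-1$ on the unit circle, and it is precisely the factorisation argument together with the injectivity of $B-\zeta I$ for $|\zeta|=1$ on $l^p$ ($p<\infty$) or $c_0$ that settles this. The other steps are either the standard Godefroy--Shapiro machinery or elementary complex analysis --- the identity theorem, the open mapping theorem, and the density of the roots of unity in $\partial\mathbb D$.
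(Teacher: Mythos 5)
This theorem is quoted from the literature (\cite{Laube}, \cite{Erdmann}) and the paper supplies no proof of its own; your argument is exactly the standard Godefroy--Shapiro eigenvector proof used in those sources (the eigenvector fields $\lambda\mapsto e_\lambda$, the identity-theorem density lemma, and the factorization controlling $\ker(f(B)^n-I)$ via injectivity of $B-\zeta I$ for $|\zeta|=1$), and it is correct. The one caveat is the one you already flag: the equivalence requires $f$ non-constant (a unimodular constant $f$ satisfies (ii) and (iii) while $f(B)=cI$ is not chaotic), a hypothesis present in the cited references but dropped in the paper's transcription of the statement.
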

Referring to the above theorem, we are interested in the case $p=\infty$ and the $J$-class behavior of $f(B)$. Geometrically, condition $(ii)$ is not enough to get the $J$-class property on $l^{\infty}$. More conditions are needed as the next corollary will show.
%
% ---------- Corollary----------------------------------------------
% 
\begin{corollary}\label{f(B)-Cor} Consider the backward shift $B$ on $l^{\infty}$ and let $f$ be a holomorphic function in a neighborhood of the closed unit disc. Then the following statements are equivalent.\\\\
(i) $f(B)$ is $J$-class.\\\\
(ii) $f(\partial\mathbb{D})\cap\overline{\mathbb{D}}=\emptyset$ and $\overline{\mathbb{D}}\subseteq f(\mathbb{D})$.
\end{corollary}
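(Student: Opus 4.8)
The plan is to derive Corollary \ref{f(B)-Cor} directly from Theorem \ref{$f$-theorem}(ii) applied to the \emph{unweighted} backward shift, i.e. the case where the weight sequence is $w_n\equiv 1$. First I would note that for $w_n\equiv 1$ the products $w_k\cdots w_{k+n-1}$ are all equal to $1$, so both the $\limsup$ and $\liminf$ defining $r_1$ and $r_2$ equal $1$; hence $r_1=r_2=1$, $K_{r_1}=K_{r_2}=\mathbb{D}$, and $\overline{K_{r_1}}=\overline{\mathbb{D}}$. Consequently $\overline{K_{r_1}}\setminus K_{r_2}=\overline{\mathbb{D}}\setminus\mathbb{D}=\partial\mathbb{D}$. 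Before invoking the theorem I should also check the hypothesis on the domain of $f$: Theorem \ref{$f$-theorem}(ii) requires $f$ holomorphic on a neighborhood of $K_{r_1}=\mathbb{D}$, which in our setting becomes a neighborhood of $\overline{\mathbb{D}}$ once we recall the theorem's proof uses $R>r(S_w)=r_1=1$; the corollary's hypothesis ``$f$ holomorphic in a neighborhood of the closed unit disc'' supplies exactly this, so the application is legitimate.

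With these substitutions the equivalence in Theorem \ref{$f$-theorem}(ii) reads: $f(B)$ is $J$-class if and only if
\[
\overline{\mathbb{D}}\cap f(\partial\mathbb{D})=\emptyset\quad\text{and}\quad\overline{\mathbb{D}}\subseteq f(\mathbb{D}),
\]
which is precisely statement (ii) of the corollary. So the entire proof is the bookkeeping of plugging $w_n\equiv 1$ into the formulas for $r_1,r_2$ and identifying the resulting sets. I would write this out as: ``Apply Theorem \ref{$f$-theorem}(ii) with $w_n=1$ for all $n$. Then $r_1=r_2=1$, so $\overline{K_{r_1}}\setminus K_{r_2}=\partial\mathbb{D}$ and $K_{r_2}=\mathbb{D}$, and the claimed equivalence is immediate.''

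The only genuine obstacle, and it is minor, is making sure the boundary/interior identifications are stated carefully: one must observe that $\overline{K_{r_1}}\setminus K_{r_2}$ with $r_1=r_2=1$ is the topological boundary $\partial\mathbb{D}$ (not the empty set, since $K_{r_2}$ is the \emph{open} disc), and that $f(B)$ in the corollary is indeed $f$ applied to the operator $B_w$ with unit weights, so that $B_w=B$. A secondary point worth a sentence is that the backward shift $B$ on $l^\infty$ is the adjoint of the forward shift $S$ on $l^1$, hence lies in $L_A^W(l^\infty)$ (even with $W=0$), so Theorem \ref{$f$-theorem} genuinely applies; this is already implicit in part (ii) of that theorem but is worth recalling. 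No deeper argument is required — the corollary is essentially a specialization, and I would keep the proof to three or four lines.
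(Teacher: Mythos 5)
Your proposal is correct and follows exactly the paper's own route: the paper likewise observes that for the unweighted shift $r_1=r_2=1$ and then invokes Theorem \ref{$f$-theorem}(ii), so that $\overline{K_{r_1}}\setminus K_{r_2}=\partial\mathbb{D}$ and $K_{r_2}=\mathbb{D}$ give statement (ii). Your additional remarks on the domain hypothesis for $f$ and on $B$ being the adjoint of the forward shift on $l^1$ are accurate bookkeeping that the paper leaves implicit.
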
 
\begin{proof} Note that for the operator $B$ the values $r_1, r_2$ are equal to $1$. Then the statement follows directly from Theorem \ref{$f$-theorem}.
\end{proof}
\begin{example}\textbf{(Costakis, Manoussos, \cite{G.C.2})} Consider $T:=B_w$ on $l^{\infty}$. Then $T$ is $J$-class if and only if 
\[r_2=\lim\limits_{n\rightarrow\infty} \inf\limits_{k\in\mathbb{N}}\ (w_k\cdot w_{k+1}\cdots w_{n+k-1})^{\frac{1}{n}}>1.\]
\end{example}
\begin{proof} Consider $p(z)=z$. Then by Theorem \ref{$f$-theorem} we have that $T$ is $J$-class if and only if \[p(\overline{K_{r_1}}\backslash K_{r_2})\cap\overline{\mathbb{D}}=\overline{K_{r_1}}\backslash K_{r_2}\cap\overline{\mathbb{D}}\overbrace{=}^{(*)}\emptyset\text{ and } \overline{\mathbb{D}}\subset K_{r_2}.\] The last inclusion follows already by $(*)$ and therefore $T$ is $J$-class if and only if 
$r_2>1$. This completes the proof.
\end{proof}
\begin{example} Consider for $m\in\mathbb{N}$ the operator $T:= I+B_{w}^{m}$. Then $T$ is $J$-class if and only if 
\[r_2=\lim\limits_{n\rightarrow\infty} \inf\limits_{k\in\mathbb{N}}\ (w_k\cdot w_{k+1}\cdots w_{n+k-1})^{\frac{1}{n}}>\sqrt[m]{2}.\]
\end{example}

\begin{proof} We can write $T=p(B_w)$, where $p(z)=1+z^m$. If $T$ is $J$-class then 
\begin{align*}1<i(I+S_w^m)&=\text{dist}(\{0\},\sigma_a(I+S^m_w))\\
               &=\min\{|1+z^m|: r_2\leq|z|\leq r_1\}\\
							 &=\min\limits_{\theta\in\left[0,2\pi\right]}|1+r_{2}^{m}e^{i\theta m}|
							 \end{align*}
follows from Theorem \ref{main equivalence}.							
Hence this is equivalent to
\[r_2>2^{\frac{1}{m}}.\] 							
In view of the the definition of $r_2$, the desired direction follows.\\\\
For the other direction take into consideration that
\[r_3\geq r_2>2^{\frac{1}{m}}\] holds, where $r_{3}=\liminf\limits_{n\rightarrow\infty}(w_{1}\cdot\ldots\cdot w_n)^{\frac{1}{n}}$ and it is easy to see that $K_{r_3}\subseteq \sigma_{p}(B_w)$\\ ($K_{r_3}$ is the open disk centered at $0$ with radius $r_3$).\\ This in particular shows that 
\[
\min\limits_{\theta\in\left[0,2\pi\right]}|1+r_{3}^{m}e^{i\theta m}|\geq \min\limits_{\theta\in\left[0,2\pi\right]}|1+r_{2}^{m}e^{i\theta m}|>1
\] 
holds. Therefore we get that $0\in\sigma_{p}(I+B_{w}^{m})$. By Theorem \ref{main equivalence} it follows that $T$ is $J$-class.
\end{proof}
%
% ---------- Proposition  ----------------------------------------------
%
\begin{proposition}\label{spectrumcontained} Consider the positive weighted unilateral backward shift $B_{w}: l^{\infty}\rightarrow l^\infty$. Then we have
\[\sigma(\widehat{B_w})\subseteq\sigma_a(S_w)=\{\lambda\in\mathbb{C}|\ i(S_w)\leq|\lambda|\leq r(S_w)\},\]
where $S_w$ is the forward shift on $l^{1}$.
\end{proposition}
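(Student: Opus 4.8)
The plan is to relate the quotient operator $\widehat{B_w}$ on $l^\infty/c_0$ to the forward shift $S_w$ on $l^1$ via duality, and to use the spectral description of $S_w$ already recalled in the proof of Theorem~\ref{$f$-theorem}. First I would fix $\lambda\notin\sigma_a(S_w)$ and show $\lambda\in\rho(\widehat{B_w})$; since $\sigma_a(S_w)=\{i(S_w)\le|\lambda|\le r(S_w)\}$, this means treating two cases: $|\lambda|>r(S_w)=r(B_w)$ and $|\lambda|<i(S_w)$. The first case is immediate, because $|\lambda|>r(\widehat{B_w})$ (the spectral radius can only drop under passing to a quotient/restriction-type operator, or directly $\|\widehat{B_w}^n\|\le\|B_w^n\|$ gives $r(\widehat{B_w})\le r(B_w)=r_1$), so such $\lambda$ lies in the resolvent set of $\widehat{B_w}$.

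The substantive case is $|\lambda|<i(S_w)=r_2$. Here I would argue that $\lambda I-B_w$ is bounded below on $l^\infty$ \emph{modulo $c_0$}, i.e. there is $c>0$ with $\operatorname{dist}((\lambda I-B_w)x,c_0)\ge c\,\operatorname{dist}(x,c_0)$ for all $x$, which is exactly surjectivity-type information for the adjoint $S_w-\lambda$ on $l^1$ combined with the fact that $\lambda I - B_w$ is also onto modulo $c_0$. Concretely: $\kappa(B_w^{\,n})^{1/n}\to i(B_w^{**})=i(S_w)=r_2$ by the identities $\kappa(T^*)=s(T)$, $s(T^*)=\kappa(T)$ used repeatedly in Section~3, and one checks that the injectivity modulus survives after quotienting by $c_0$ because $B_w(c_0)\subseteq c_0$ and $B_w$ restricted to $c_0$ is the classical weighted backward shift whose relevant modulus is governed by the \emph{same} weights. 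Thus for $|\lambda|<r_2$ the operator $\lambda I-\widehat{B_w}$ is bounded below on $l^\infty/c_0$; a parallel computation with the surjectivity modulus $s$, or duality with the fact that $S_w-\lambda$ has dense range in $l^1$ for $|\lambda|<i(S_w)$, shows it is also surjective, hence invertible. Combining the two cases gives $\rho(\widehat{B_w})\supseteq\mathbb{C}\setminus\sigma_a(S_w)$, which is the claim.

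The main obstacle I anticipate is the bookkeeping around $c_0$: one must verify that the various moduli ($\kappa$, $s$, and the limits $i$, $\delta$) for $\widehat{B_w}$ on $l^\infty/c_0$ are controlled by $r_1,r_2$ and not worsened by the quotient, which requires knowing that $c_0$ is $B_w$-invariant (clear) and that $B_w|_{c_0}$ does not introduce spectrum inside the annulus — here one can invoke the classical computation of the approximate point spectrum of a weighted backward shift on $c_0$, whose lower bound is again $r_2$ (cf.\ \cite{K.L.}). Once this is in place, the connectedness/duality steps are routine and mirror the arguments already carried out for $\sigma_a(T^*)$ in the proof of Theorem~\ref{$f$-theorem}.
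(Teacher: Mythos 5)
Your treatment of the outer region and of the surjectivity half is essentially the paper's: $\|\widehat{B_w}^{\,n}\|\le\|B_w^{\,n}\|=\|S_w^{\,n}\|$ gives $r(\widehat{B_w})\le r(S_w)$, and for $|\lambda|<i(S_w)$ the operator $B_w-\lambda$ is surjective on $l^\infty$ (which does pass to the quotient). But the correct reason for that surjectivity is $s(B_w-\lambda)=\kappa(S_w-\lambda)>0$ for $\lambda\notin\sigma_a(S_w)$ --- not, as you write, that $S_w-\lambda$ has dense range in $l^1$: for $|\lambda|<i(S_w)$ the operator $S_w-\lambda$ is bounded below with range of codimension one, so its range is closed and \emph{proper}, hence not dense. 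Note also that $(l^\infty/c_0)^*=c_0^{\perp}$, not $l^1$, so there is no direct duality between $\widehat{B_w}$ and $S_w$ to lean on.

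The genuine gap is the injectivity of $\widehat{B_w}-\lambda$ for $|\lambda|<i(S_w)$, equivalently the statement that $(B_w-\lambda)x\in c_0$ forces $x\in c_0$; this is the heart of the proposition and your proposal does not prove it. The mechanism you invoke --- that ``the injectivity modulus survives after quotienting by $c_0$'' --- cannot work, because there is no injectivity modulus to survive: $\kappa(B_w^{\,n})=s(S_w^{\,n})=0$ for every $n$ (the forward shift is not surjective), so $i(B_w)=0$, not $r_2$; the identity you actually have is $\delta(B_w)=i(S_w)=r_2$, which concerns surjectivity of $B_w$ and says nothing about its kernel. Indeed $B_w-\lambda$ has nontrivial kernel for small $|\lambda|$ (the eigenvectors $e_\lambda$ of Lemma \ref{eigenvectorlemma}), and in any case injectivity never passes to a quotient for free, since $N(\widehat{T})=T^{-1}(M)/M$. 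The paper closes exactly this gap by a concrete coordinatewise argument: choose $n_0$ with $\inf_k w_k\cdots w_{k+n_0-1}>|\mu|^{n_0}$, so that $B_w^{n_0}x-\mu^{n_0}x\in c_0$ reads $w_k\cdots w_{k+n_0-1}x_{k+n_0}-\mu^{n_0}x_k=\varepsilon_k\to0$, giving $|x_{k+n_0}|\le\delta|x_k|+\varepsilon_k/|\mu|^{n_0}$ with $\delta<1$ and hence $\limsup_k|x_k|\le\delta\limsup_k|x_k|=0$, i.e.\ $x\in c_0$. Some argument of this kind, establishing $(B_w-\mu)^{-1}(c_0)=c_0$, is indispensable and is missing from your proposal.
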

\begin{proof}
Consider any $\mu\in\mathbb{C}$ with $c:=|\mu|< i(S_w)$. We choose $n_0\in\mathbb{N}$ large enough such that 
\[\inf\limits_{k\in\mathbb{N}}\ w_k\cdot\ldots\cdot w_{k+n_{0}-1}> c^{n_0}.\]
Therefore we can find $\delta<1$ such that 
\[1>\delta>\frac{c^{n_0}}{w_k\cdot\ldots\cdot w_{k+n_{0}-1}}\]
holds for all $k\in\mathbb{N}$. We will show that $(\widehat{B_{w}})^{n_0}-{\mu}^{n_0} I$ is bijective.
Take into consideration that 
\begin{align*}B^{n_0}_{w}(x_1,x_2,\ldots)&=((w_{1} w_{2}\ldots w_{n_0}\cdot x_{{n_{0}}+1}), (w_{2} w_{3}\ldots w_{{n_0}+1}\cdot x_{n_{0}+2}),\ldots)\\
&=(w_{k}w_{k+1}\ldots w_{n_{0}+k-1}\cdot x_{k+n_{0}})_{k\in\mathbb{N}}.
\end{align*}
Consider the equation \[((\widehat{B_{w}})^{n_0}-{\mu}^{n_0} I)\left[x\right]=0 \text{ with } \left[x\right]=\left[(x_{1},x_{2},\ldots)\right],\] which is equivalent to say that 
\[B^{n_0}_{w}(x_1,x_2,\ldots)-{\mu}^{n_0}(x_1,x_2,\ldots)\in c_0,\]
or more precisely
\[w_{k}\cdot\ldots\cdot w_{k+n_{0}-1}x_{k+n_0}-{\mu}^{n_0}x_k=\varepsilon_k \text{ for all }k\geq1,\]
where $\lim\limits_{k\rightarrow\infty}\varepsilon_{k}=0$.
Hence we get the estimate
\begin{align*}|x_{k+n_0}|&\leq\frac{c^{n_0}}{w_{k}\cdot\ldots\cdot w_{k+n_{0}-1}}|x_{k}|+\frac{\varepsilon_k}{w_{k}\cdot\ldots\cdot w_{k+n_{0}-1}}\\
&\leq\delta|x_{k}|+\frac{\varepsilon_k}{{c}^{n_0}}.
\end{align*}
Therefore, 
\begin{align*}
\limsup\limits_{k\rightarrow\infty}|x_{k+n_0}|&\leq\delta\limsup\limits_{k\rightarrow\infty}|x_{k}|+\frac{1}{c^{n_0}}\lim\limits_{k\rightarrow\infty}\varepsilon_k\\
&=\delta\limsup\limits_{k\rightarrow\infty}|x_{k}|.
\end{align*}
Since  $\limsup\limits_{k\rightarrow\infty}|x_{k+n_0}|=\limsup\limits_{k\rightarrow\infty}|x_{k}|$ and $\delta$ is smaller than $1$, it follows that $\limsup\limits_{k\rightarrow\infty}|x_{k}|=0$ and therefore 
$\lim\limits_{k\rightarrow\infty}x_{k}=0$. In other words, $\left[x\right]=0$ holds and hence $(\widehat{B_{w}})^{n_0}-{\mu}^{n_0} I$ is injective.\\\\
Next we show that $(\widehat{B_{w}})^{n_0}-{\mu}^{n_0} I$ is surjective. For that purpose consider the forward shift $S_w$ on $l^{1}$. Since \[\sigma_s(B_{w})=\sigma_{a}(S_{w})=\{\lambda\in\mathbb{C}\ |\ i(S_w)\leq|\lambda|\leq r(S_w)\}\]
it follows that $B_w-e^{i\theta_k}cI$ is surjective for $k=\{1,2,...,n_0\}$, where $e^{i\theta_k}c$ are the roots of the equation $z^{n_0}={\mu}^{n_0}$.
Now notice that 
\[B_w^{n_0}-{\mu}^{n_0}I=(B_{w}-e^{i\theta_1}cI)\cdot\ldots\cdot(B_{w}-e^{i\theta_n}cI)\]
and therefore $B_w^{n_0}-{\mu}^{n_0}I$ is surjective. Since the surjectivity of every operator $T$ implies the surjectivity of the induced operator $\widehat{T}$, it follows that
$(\widehat{B_{w}})^{n_0}-{\mu}^{n_0} I$ is surjective, hence bijective. This means ${\mu}^{n_0}\notin\sigma((\widehat{B_w})^{n_0})$ and therefore $\mu\notin\sigma(\widehat{B_w})$.
From \[\left\|\widehat{B_w}^{n}\right\|\leq\left\|B_{w}^{n}\right\|=\left\|S_{w}^{n}\right\|\]
we get  $r(\widehat{B_{w}})\leq r(S_w)$. Finally we obtain with the preceding calculations that 
\begin{align*}\sigma(\widehat{B_{w}})&\subseteq\{\lambda\in\mathbb{C}\ |\ i(S_w)\leq|\lambda|\leq r(\widehat{B_{w}})\}\\
&\subseteq\{\lambda\in\mathbb{C}\ |\ i(S_w)\leq|\lambda|\leq r(S_w)\}=\sigma_a(S_w)
\end{align*} 
holds. 
\end{proof}
%
% ---------- Lemma  ----------------------------------------------
%

\begin{lemma}\label{eigenvectorlemma} Let $(w_n)_{n}$ be a positive and bounded sequence. Suppose there exists $\lambda_{0}\in\mathbb{C}$ and $\varepsilon>0$ such that the open disk $K_{\varepsilon}(\lambda_0)$ is contained in $K_{m}(0)$, where 
\[m:=\lim\limits_{n\rightarrow\infty}\inf\ (\prod_{k=1}^{n}w_k)^{\frac{1}{n}}>0.\]
Then the span of the set consisting of the vectors
\[e_{\lambda}:=(\frac{\lambda}{w_1},\frac{\lambda^2}{w_{1}w_{2}},\ldots),\]
with $\lambda\in K_{\varepsilon}(\lambda_0)$ is dense in $c_0$.
\end{lemma}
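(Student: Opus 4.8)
The plan is to run the standard duality argument for density: by Hahn--Banach, $\mathrm{span}\{e_\lambda:\lambda\in K_\varepsilon(\lambda_0)\}$ is dense in $c_0$ precisely when the only functional $\phi\in (c_0)^{*}=l^{1}$ annihilating every $e_\lambda$ is $\phi=0$. So I would fix $\phi=(a_n)_{n}\in l^{1}$ with $\langle\phi,e_\lambda\rangle=0$ for all $\lambda\in K_\varepsilon(\lambda_0)$ and show $\phi=0$.

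First I would record the basic facts. Since $w$ is bounded, $m=\liminf_n(\prod_{k=1}^{n}w_k)^{1/n}<\infty$, so $K_m(0)$ is a genuine disc. Each $e_\lambda$ indeed lies in $c_0$: the hypothesis gives $|\lambda|<m$, hence $(\prod_{k=1}^{n}w_k)^{1/n}\geq\tfrac12(|\lambda|+m)>|\lambda|$ for all large $n$, so the $n$-th coordinate $\lambda^{n}/(w_1\cdots w_n)$ decays geometrically. Consequently, for $\phi\in l^{1}$ the pairing $\langle\phi,e_\lambda\rangle=\sum_{n\geq1}a_n\,\lambda^{n}/(w_1\cdots w_n)$ converges absolutely.

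Next I would introduce the holomorphic function $g(\lambda):=\sum_{n=1}^{\infty}\frac{a_n}{w_1\cdots w_n}\,\lambda^{n}$ and check that its radius of convergence is at least $m$. Indeed $|a_n|\leq\|\phi\|_{1}$ forces $\limsup_n|a_n|^{1/n}\leq1$, and since $1/(w_1\cdots w_n)^{1/n}$ is bounded (its $\limsup$ equals $1/m$), $\limsup_n\big(|a_n|/(w_1\cdots w_n)\big)^{1/n}\leq\big(\limsup_n|a_n|^{1/n}\big)\cdot\big(1/\liminf_n(w_1\cdots w_n)^{1/n}\big)\leq 1/m$. Hence $g$ is holomorphic on $K_m(0)$ and $g(\lambda)=\langle\phi,e_\lambda\rangle$ there.

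Finally, by assumption $g$ vanishes on the nonempty open set $K_\varepsilon(\lambda_0)\subseteq K_m(0)$; as $K_m(0)$ is connected, the identity theorem yields $g\equiv0$ on $K_m(0)$, so all Taylor coefficients of $g$ at $0$ vanish, i.e. $a_n=0$ for every $n$ and $\phi=0$. This completes the argument. The only step needing a little care is the radius-of-convergence estimate guaranteeing that $g$ is holomorphic on all of $K_m(0)$ (so that the identity theorem applies across the disc, even though $K_\varepsilon(\lambda_0)$ need not contain $0$); the rest is the routine "an analytic functional vanishing on an open set is zero" reasoning.
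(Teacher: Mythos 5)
Your proof is correct and follows essentially the same route as the paper: annihilate with an $l^1$ functional, recognize the pairing as a power series in $\lambda$ on $K_m(0)$, apply the identity theorem to conclude all coefficients vanish, and finish with Hahn--Banach. Your explicit verification that the radius of convergence is at least $m$ is a detail the paper leaves implicit, but the argument is the same.
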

\begin{proof}
Consider any $y^{*}\in {c_0}^{*}\cong l^1$. Assume $y^{*}(e_\lambda)=0$. We can find a sequence $(y_n)_{n}\in l^1$ which can be uniquely identified with $y^{*}$. That means
\[y^{*}(e_{\lambda})=\sum_{n=0}^{\infty}y_n\cdot\frac{\lambda^{n}}{(\prod_{k=1}^{n}w_k)}\]
holds.
The above equation defines a power series $\sum_{n=0}^{\infty}a_n \lambda^{n}$ on $K_m(0)$ with
$a_n:=\frac{y_n}{(\prod_{k=1}^{n}w_k)}$, which is identically zero  for all 
$\lambda\in K_{\varepsilon}(\lambda_0)$. Therefore $a_n=0$ for all $n\in\mathbb{N}$, which in turn implies that $y_n=0$ for all $n\in\mathbb{N}$. This shows that $y^{*}=0$. By a well-known application of the Hahn-Banach theorem it follows that the span of the $e_{\lambda}$ is dense in $c_0$.
\end{proof}
%
% ---------- Lemma  ----------------------------------------------
%
\begin{lemma}\label{induced J-class}
Let $X$ be a Banach space and $T\in L(X)$. Assume that $T$ is $J$-class and that there exists a $T$-invariant and closed subspace $M\subseteq A_T$ such that $A_T\backslash M\neq\emptyset$. Then $\widehat{T}$ is $J$-class.
\end{lemma}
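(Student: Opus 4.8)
The plan is to use the quotient map $q\colon X\to X/M$, $q(x)=[x]_M$, together with a single vector taken from $A_T\setminus M$. Concretely, fix any $x\in A_T\setminus M$; this is possible by hypothesis, and $[x]_M\neq 0$ precisely because $x\notin M$. I will show that $J_{\widehat T}([x]_M)=X/M$, which immediately gives that $\widehat T$ is $J$-class.

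The two facts doing all the work are: (a) $q$ is continuous, indeed $\|[z]_M\|\le\|z\|$ for every $z\in X$; and (b) $q$ intertwines the dynamics, $q\circ T=\widehat T\circ q$, whence $q\circ T^{k}=\widehat T^{\,k}\circ q$ for every $k\in\mathbb N$. (The $T$-invariance of $M$ is exactly what makes $\widehat T$ well defined, and the assumption $M\subseteq A_T$ guarantees the situation is non-vacuous.) From (a) and (b) I first establish, for an \emph{arbitrary} $z\in X$, the inclusion $q\bigl(J_T(z)\bigr)\subseteq J_{\widehat T}\bigl([z]_M\bigr)$: if $y\in J_T(z)$ is witnessed by a strictly increasing sequence $(k_n)$ and vectors $z_n\to z$ with $T^{k_n}z_n\to y$, then $[z_n]_M\to[z]_M$ and $\widehat T^{\,k_n}[z_n]_M=[T^{k_n}z_n]_M\to[y]_M$ by continuity of $q$, so $[y]_M\in J_{\widehat T}([z]_M)$.

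Applying this with $z=x\in A_T\setminus M$, and using $J_T(x)=X$, I get
\[
X/M=q(X)=q\bigl(J_T(x)\bigr)\subseteq J_{\widehat T}\bigl([x]_M\bigr)\subseteq X/M,
\]
hence $J_{\widehat T}([x]_M)=X/M$ with $[x]_M\neq 0$, so $\widehat T$ is $J$-class. There is essentially no obstacle here beyond bookkeeping; the only point requiring care — and the reason for the hypothesis $A_T\setminus M\neq\emptyset$ rather than merely $A_T\neq\emptyset$ — is that the chosen representative must project to a \emph{nonzero} class in $X/M$, since the definition of a $J$-class operator demands a nonzero vector whose $J$-set is the whole space.
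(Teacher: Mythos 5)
Your proof is correct and follows essentially the same route as the paper's: choose $x\in A_T\setminus M$ so that $[x]\neq 0$, and push the witnessing sequences for $J_T(x)=X$ through the quotient map using its continuity and the intertwining $q\circ T^{k}=\widehat{T}^{k}\circ q$. Your phrasing via the inclusion $q(J_T(z))\subseteq J_{\widehat{T}}([z]_M)$ is just a slightly more systematic packaging of the identical argument.
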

\begin{proof} We choose any vector $x\in A_T\backslash M$, then $\left[x\right]\neq 0$. Now consider any $\left[y\right]\in X/M$.
Then there exists a strictly increasing sequence $(n_k)_{k}$ of positive integers and a sequence $(x_k)_{k}$ with $x_k\rightarrow x$, such that $T^{n_k}x_k\rightarrow y$. Hence $\left[x_k\right]\rightarrow \left[x\right]$ and $\widehat{T}^{n_k}\left[x_k\right]=\left[T^{n_k}x_k\right]\rightarrow\left[y\right]$ as $k\rightarrow\infty$ and this shows that $\widehat{T}$ is $J$-class.
\end{proof}
%
% ---------- Theorem  ----------------------------------------------
%
\begin{theorem}\label{J-set of f(B_w)} Consider the weighted backward shift on $l^{\infty}$. Assume that $T:=f(B_w)$ is $J$-class, where $f$ is a holomorphic function defined on a open neighborhood of $\sigma(B_w)$. Then 
\[A_{T}=c_0.\] 
\end{theorem}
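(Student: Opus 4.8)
The plan is to prove the two inclusions $c_0\subseteq A_T$ and $A_T\subseteq c_0$ separately, using the structural results already at hand about $B_w$ and its induced operator $\widehat{B_w}$ on $l^\infty/c_0$.

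\textbf{Step 1: $c_0\subseteq A_T$.} Since $T=f(B_w)$ is $J$-class, Corollary \ref{main equivalence} gives $i(S_w)>1$ and $0\in\sigma_p(T)$; in fact the proof of Proposition \ref{pre-main-equivalence} shows $J_T^{mix}(0)=l^\infty$ via the surjectivity modulus estimate $s(T^{n_0})>(1+\varepsilon)^{n_0}$. The point is that the same backward-iteration argument works with $0$ replaced by any $x\in c_0$: given $y\in l^\infty$, solve $T^{mn_0}x_m = y$ with $\|x_m\|\to 0$ geometrically, then perturb by a fixed kernel vector $u$ of $T$ (which may be chosen in $c_0$, since $\widehat{T}$ on $l^\infty/c_0$ can be arranged to be injective — see Step 2 — so $N(T)\subseteq c_0$) to land at an arbitrary starting vector. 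More directly: $c_0$ is $T$-invariant and $T|_{c_0}$ is $f$ of the weighted backward shift on $c_0$; since $i(S_w)>1$ the operator $T|_{c_0}$ is itself $J$-class on $c_0$ with $A_{T|_{c_0}}=c_0$ (this is exactly Costakis--Manoussos for $c_0$, extended to $f(B_w)$ by the spectral mapping argument of Theorem \ref{$f$-theorem}), hence $J_{T|_{c_0}}(x)=c_0$ for every $x\in c_0$; combined with $J_T^{mix}(0)=l^\infty$ and a mixing-composition argument one gets $J_T(x)=l^\infty$ for all $x\in c_0$.

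\textbf{Step 2: $A_T\subseteq c_0$.} Suppose toward a contradiction that there is $x\in A_T$ with $x\notin c_0$, i.e. $[x]\neq 0$ in $l^\infty/c_0$. Since $c_0$ is $T$-invariant, by Lemma \ref{induced J-class} (applied with $M=c_0$, using that $c_0\subseteq A_T$ from Step 1 and $A_T\setminus c_0\neq\emptyset$) the induced operator $\widehat{T}=f(\widehat{B_w})$ on $l^\infty/c_0$ is $J$-class. But $\widehat{T}=f(\widehat{B_w})$, and by Proposition \ref{spectrumcontained} we have $\sigma(\widehat{B_w})\subseteq\sigma_a(S_w)=\overline{K_{r_1}}\setminus K_{r_2}$, so by the spectral mapping theorem $\sigma(\widehat{T})\subseteq f(\overline{K_{r_1}}\setminus K_{r_2})$. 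Since $T=f(B_w)$ is $J$-class, Theorem \ref{$f$-theorem}(ii) gives $\overline{\mathbb{D}}\cap f(\overline{K_{r_1}}\setminus K_{r_2})=\emptyset$, hence $\overline{\mathbb{D}}\cap\sigma(\widehat{T})=\emptyset$. In particular $\widehat{T}$ is invertible, so $0\notin\sigma_p(\widehat{T}^{\,*})\cap\overline{\mathbb{D}}$ fails only if $\sigma(\widehat T)$ meets $\partial\mathbb D$, which it does not; more to the point, $\sigma(\widehat T)\cap\overline{\mathbb D}=\emptyset$ forces $\sigma(\widehat T)\cap\partial\mathbb D=\emptyset$, contradicting the necessary condition $\sigma(\widehat T)\cap\partial\mathbb D\neq\emptyset$ supplied by the Spectral Lemma (Lemma \ref{spectral-lemma}(ii)) for the $J$-class operator $\widehat T$. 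This contradiction shows $A_T\setminus c_0=\emptyset$, i.e. $A_T\subseteq c_0$.

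\textbf{Main obstacle.} The delicate point is Step 1, establishing that \emph{every} $x\in c_0$ (not just $x=0$) satisfies $J_T(x)=l^\infty$: one must upgrade $J_T^{mix}(0)=l^\infty$ to arbitrary starting vectors in $c_0$, which requires either that $c_0$ consists of ``$J$-mixing'' initial points for $T|_{c_0}$ — needing the Costakis--Manoussos analysis of $A_{B_w}$ transported through $f$ via Theorem \ref{$f$-theorem} — or a direct combination of the backward-iteration construction with a density argument showing the orbit corrections can absorb any fixed $c_0$-vector. Step 2, by contrast, is essentially bookkeeping once Proposition \ref{spectrumcontained}, Theorem \ref{$f$-theorem}(ii), Lemma \ref{induced J-class} and the Spectral Lemma are invoked in the right order.
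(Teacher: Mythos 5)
Your Step 2 ($A_T\subseteq c_0$) is correct and is essentially the paper's argument: Lemma \ref{induced J-class} with $M=c_0$, Proposition \ref{spectrumcontained} plus spectral mapping to get $\sigma(\widehat{T})\subseteq f(\overline{K_{r_1}}\backslash K_{r_2})$, the necessary condition $\overline{\mathbb{D}}\cap f(\overline{K_{r_1}}\backslash K_{r_2})=\emptyset$ from Theorem \ref{$f$-theorem}, and then a contradiction with the Spectral Lemma. The problem is Step 1, and you have correctly identified it as the obstacle but not actually closed it. Neither of your two proposed mechanisms works as stated. Perturbing by a fixed kernel vector $u\in N(T)$ only shows that $u$ itself (or more generally vectors in $N(T^k)$) belongs to $A_T$: it moves the starting point from $0$ to $u$, not to an arbitrary $x\in c_0$, and for a general $x\in c_0$ one does \emph{not} have $T^{k_n}x\to 0$ (take $T=2B$ and $x=(1/k)_k$: $\|2^nB^nx\|=2^n\sup_{k>n}1/k\to\infty$), so you cannot simply add $x$ to the backward-iteration sequence. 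The second mechanism has a synchronization gap: knowing $J_{T|_{c_0}}(x)=c_0$ gives you, for each target, \emph{some} subsequence $(k_n)$ along which $x$ can be steered to $0$ inside $c_0$, while $J^{mix}_T(0)=l^\infty$ gives a construction along the specific arithmetic sequence $(mn_0)$; composing the two requires these subsequences to agree, i.e.\ it requires $0\in J^{mix}_{T|_{c_0}}(x)$ (in effect, topological mixing of $T|_{c_0}$), which is strictly stronger than what ``Costakis--Manoussos for $c_0$'' as you invoke it provides, and which you do not prove.

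The paper closes Step 1 by a different and self-contained route that avoids any dynamics on $c_0$: the vectors $e_\lambda=(\lambda/w_1,\lambda^2/(w_1w_2),\dots)$ are eigenvectors of $B_w$ for $\lambda$ in the disk $K_m(0)$, hence eigenvectors of $f(B_w)$ with eigenvalue $f(\lambda)$; since $\overline{\mathbb{D}}\subseteq f(K_{r_2})\subseteq f(K_m(0))$ one can pick a small disk $K_\varepsilon(\lambda_0)\subseteq f^{-1}(\mathbb{D})\cap K_m(0)$, so that each such $e_\lambda$ is an eigenvector with eigenvalue of modulus $<1$ and is therefore a $J$-vector by \cite{G.C.}, 5.9. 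Lemma \ref{eigenvectorlemma} (a power-series/Hahn--Banach argument) shows the span of these $e_\lambda$ is dense in $c_0$, and since $A_T$ is a closed subspace (\cite{G.C.}, 2.12) this yields $c_0\subseteq A_T$. If you want to salvage your own Step 1 you would need to prove that $T|_{c_0}$ is topologically mixing (e.g.\ via a Godefroy--Shapiro-type eigenvector criterion using both $|f(\lambda)|<1$ and $|f(\lambda)|>1$), which is more work than the paper's argument and still relies on the same eigenvector density lemma.
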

\begin{proof}
To show that $A_{T}\subseteq c_0$, we consider the induced operator $\widehat{T}:l^{\infty}/c_0\rightarrow l^{\infty}/c_0$ and assume that $A_{T}\backslash c_0\neq\emptyset$. By Lemma \ref{induced J-class}, the operator $\widehat{T}$ is $J$-class. Then we get from the Spectral mapping theorems (see \cite{Erdmann}, p. 365 and \cite{P.A.}, p. 83) and Proposition \ref{spectrumcontained}
\[\sigma(\widehat{T})=\sigma(\widehat{f(B_w))})=f(\sigma(\widehat{B_w}))\subseteq f(\sigma_{a}(S_w))=\sigma_{a}(f(S_w)).\tag{$*$}\]
Since $f(B_w)$ is $J$-class it follows as in the proof of Proposition \ref{pre-main-equivalence}, that
\[\sigma_{a}(f(S_w))\cap\overline{\mathbb{D}}=\emptyset\]
holds. Hence by $(*)$ we get that 
\[\sigma(\widehat{T})\cap\overline{\mathbb{D}}=\emptyset\]
holds, which is a contradiction to Lemma \ref{spectral-lemma} (Spectral Lemma), since $\widehat{T}$ is $J$-class.\\\\
To show the other inclusion, take into consideration that $r_2\leq m$ where $r_2$ is the value as in Theorem \ref{$f$-theorem}. Hence by the same theorem we get that $\mathbb{D}\subseteq\overline{\mathbb{D}}\subseteq f(K_{r_2}(0))\subseteq f(K_m(0))$ holds, where $K_{r_2}(0)$ is the open disk with center zero and radius $r_2$. The set $f^{-1}(\mathbb{D})\cap K_m(0)$ is open and non-empty. Choose at this point any $\lambda_0$ and $\varepsilon>0$ such that $K_{\varepsilon}(\lambda_0)\subseteq f^{-1}(\mathbb{D})\cap K_m(0)$. Then, $f(K_{\varepsilon}(\lambda_0))\subseteq f(f^{-1}(\mathbb{D})\cap K_m(0))\subseteq \mathbb{D}$. Now each $e_\lambda$, defined as in Lemma \ref{eigenvectorlemma} is an eigenvector of $B_w$ to the corresponding eigenvalue $\lambda\in K_{\varepsilon}(\lambda_0)$. Hence, $e_\lambda$ is an eigenvalue of $f(B_w)$ to the corresponding eigenvalue $f(\lambda)\in f(K_{\varepsilon}(\lambda_0))$. Since $|f(\lambda)|<1$, it follows that $e_\lambda$ is a $J$-vector by \cite{G.C.}, 5.9 . By Lemma \ref{eigenvectorlemma} the set
$\{e_\lambda:\lambda\in K_{\varepsilon}(\lambda_0)\}$ is dense in $c_0$.
This shows $c_0\subseteq A_T$, since $A_T$ is closed by \cite{G.C.}, 2.12. 
\end{proof}
%
% ---------- Section  ----------------------------------------------
%
\section{Open Problems} We conclude this work with some open problems which occurred during this research.\\\\
\textbf{Problem 1.} We have seen in Section 2 that the boundary of the spectrum of an operator on a $C(K)$-space with the Grothendieck property is contained in the point spectrum of its adjoint. The boundary of the spectrum is especially contained in the surjectivity spectrum. So one can formulate the more general question: Is it true that an operator $T:C(K)\rightarrow C(K)$ with dense range, where $C(K)$ has the Grothendieck property, is actually surjective?\\\\
In view of Theorem \ref{C(K) spectrum} and the fact that $C(K)$ has the Dunford Pettis- and the Grothendieck property we state the following problem.\\\\ 
\textbf{Problem 2.} Suppose $X$ is a Banach space which has the property that $\partial\sigma(T)\subset\sigma_{p}(T^{*})$ for every operator $T\in L(X)$. Does this imply that $X$ has the Grothendieck- and the Dunford Pettis property, or at least one of these properties?\\\\
\textbf{Problem 3.} Does Corollary 2.6 hold also for the space $H^{\infty}$, i.e. the space of analytic and bounded functions defined on $\mathbb{D}$?

\end{document}